\newtheorem{theorem}{Theorem}[section]
\newtheorem{proposition}[theorem]{Proposition}
\newtheorem{lemma}[theorem]{Lemma}
\newtheorem{corollary}[theorem]{Corollary}
\newtheorem{claim}[theorem]{Claim}
\newtheorem{definition}[theorem]{Definition}
\newtheorem{fact}[theorem]{Fact}
\newcommand{\N}{\mathbb{N}}
\newcommand{\F}{\mathbb{F}}
\newcommand{\ord}{{\rm ord}}
\newenvironment{proof}{{\it Proof\,:}}{\hfill$\Diamond$}
\begin{document}

\author{Dimitra Chompitaki, 
Manos Kamarianakis\footnote{Corresponding Author,
\url{kamarianakis@uoc.gr}} \ and Thanases Pheidas\\
University of Crete, Dept. of Mathematics \& Applied Mathematics}

\title{Decidability of the theory of addition and the
Frobenius map in rings of rational functions}

\date{July 2021}

\maketitle

\begin{abstract} We prove model completeness for the theory of addition and the Frobenius map for certain subrings of rational functions in positive characteristic.  More precisely: 
Let $p$ be a prime number, $\F_{p}$ the prime 
field with $p$ elements, $F$ a field algebraic over $\F_p$ and $z$ a variable. 
We show that the structures of rings $R$, which are generated over $F[z]$ by 
adjoining a finite set of inverses of irreducible polynomials of $F[z]$
(e.g., $R=\F_p[z, \frac{1}{z}]$),  
with addition, the Frobenius map $x\mapsto x^p$ and the 
predicate `$\in F$' - together with function symbols and constants 
that allow building all elements of $\F_p[z]$ - are model complete, 
i.e., each formula is equivalent to an existential formula.  
Further, we show that in these structures  all questions, i.e., {\it first order sentences}, about the rings $R$ may be, constructively, translated into questions about $F$. 
\end{abstract}

\maketitle

\section{Introduction} 
Let $p$ be a prime number, $\F_p$ a field with $p$ elements and $F$ a field, algebraic over $\F_p$. Let $z$ be a variable, $\F_p[z]$ and $F[z]$ the rings of polynomials of $z$ over $\F_p$ and $F$ accordingly, and let $s_1,\dots,s_\nu$ be irreducible elements of $\F_p[z]$ which remain irreducible in $F[z]$. Let $S=\{ s_1,\dots ,s_\nu\}$ and $R=F[z,S^{-1}]$ be the ring which is generated over $F$ by $z$ and the inverses of the elements of $S$. 
Consider $\mathcal{R}$  as a structure (model) of the language $\mathcal {L}_p(z):= \{ =, +,x\mapsto x^p, 0,1, x\mapsto zx, \in F\}$ with symbols $+$ for addition, $x\mapsto x^p$ for the \emph{Frobenius map}, constant-symbols for $0$ and $1$,  the function symbol $x\mapsto zx$ for the multiplication-by-$z$ map  and a symbol for belonging to $F$.   
In \cite{robinson1951undecidable} R. Robinson proved that  the ring theory of rings of polynomials such as $\F_{p}[z]$, in the language of rings, augmented by a constant-symbol for $z$, is undecidable. 
In \cite{Denef1979} it was proved that even the positive-existential theory of a polynomial ring in this language is undecidable. 
The similar result for the rings $R$ was proved in  \cite{shlapentokh1993}.  

It is then natural to ask questions of decidability of substructures of the ring-structure of $R$. Here we prove the following.
 Consider  $\mathcal{L}_p :=\{ +, =, x\mapsto x^p, 0,1\}$ for the restriction of the language $\mathcal{L}_p(z)$, with interpretations as above.  
 Also consider the extension $\mathcal{L}_p(z)^e$ of $\mathcal{L}_p(z)$ by predicate symbols $P_\sigma$, one for each formula $\sigma$ of $\mathcal L_p$. We interpret $P_\sigma(\alpha)$ by `{\it each element of the tuple $\alpha$ is an element of $F$ and $\sigma (\alpha )$ holds true over $F$}' - we assume that all the free variables of the formula $\sigma$ are among the tuple of variables $\alpha$. We prove:
\begin{theorem}\label{main} 
Let $F$ be an algebraic field extension of $\F_p$. Let $\mathcal R$ be as above. Then the following hold:
\begin{enumerate}
\item The ${\mathcal{L}_p(z)^e}$-theory of $\mathcal R$ is effectively model-complete. 
\item Every ${\mathcal{L}_p(z)^e}$-sentence is equivalent in $\mathcal R$ to a sentence of the form $P_\sigma$, where $\sigma$ is a sentence of $\mathcal L_p$. 
\end{enumerate} 
\end{theorem}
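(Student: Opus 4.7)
The plan is to prove part (1), effective model-completeness of the $\mathcal{L}_p(z)^e$-theory of $\mathcal{R}$, and then to obtain part (2) as a direct consequence. For (1) I would argue by induction on formula complexity: it suffices to show that the negation of every existential $\mathcal{L}_p(z)^e$-formula is again equivalent over $\mathcal{R}$ to an existential formula. The extended language is what makes this feasible: the predicates $P_\sigma$ absorb all $\mathcal{L}_p$-definable conditions on tuples from $F$, so no model-completeness for $F$ itself is required (and in general is not available); only the ``rational-function layer'' above $F$ needs to be unpacked existentially.

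Before attacking the main elimination step I would build a library of existentially definable relations on $\mathcal{R}$. The key ones are: for each $s_i \in S$ and $n \in \Z$, an existential formula for $\ord_{s_i}(x) \ge n$, obtained by combining multiplication-by-$z$ with iterates of Frobenius to control the $s_i$-adic valuation; an existential formula asserting that $x$ has a prescribed ``support shape'' in its partial-fraction decomposition $x = g(z) + \sum_i \sum_k d_{i,k}/s_i^k$; and an existential formula asserting that the $F$-coefficients appearing in this decomposition satisfy a given $P_\sigma$-condition (possible because $\in F$ is in the language and Frobenius together with $z$-shifts can be used to isolate individual coefficients). The Frobenius map is central throughout, because it scales the exponent of $z$ by $p$ uniformly without mixing positions with coefficients, which is exactly what makes support analysis existential.

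With this toolkit, every quantifier-free $\mathcal{L}_p(z)^e$-formula about $x \in R$ rewrites, modulo a case split on the support of $x$, into a Boolean combination of support assertions and $P_\sigma$-predicates applied to the coefficient tuple; existential quantification over $x \in R$ then becomes existential quantification over a finite tuple in $F$ together with a discrete choice of support data. The main obstacle is bounding the support uniformly: a priori a witness $x$ may involve arbitrarily high powers of $z$ and of each $1/s_i$, so eliminating $\exists x$ must be reduced to finitely many cases, and I expect this to be handled by a pigeonhole argument driven by the Frobenius — intuitively, if the support of a putative witness is very large, one can ``compress'' it by a $p$-th root or absorb part of it into a Frobenius image, reducing after finitely many case-splits to a bounded situation in which the coefficient data can be quantified explicitly. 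Once (1) is established, part (2) is immediate: every sentence is equivalent to an existential $\mathcal{L}_p(z)^e$-sentence, the previous reduction pushes all outer quantifiers down to quantifiers over finite $F$-tuples and bounded combinatorial data, and packaging the result into a single $\mathcal{L}_p$-sentence $\sigma$ over $F$ yields an equivalence between the original sentence and $P_\sigma$; effectiveness is automatic because every step is syntactic.
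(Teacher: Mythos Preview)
Your high-level architecture is right: show that every existential $\mathcal{L}_p(z)^e$-formula is equivalent to a universal one, and let the predicates $P_\sigma$ absorb whatever happens over $F$. But the mechanism you propose diverges from the paper's and, as written, has a real gap at the crucial step.

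The paper does not work via a ``library'' of existential valuation predicates and a support/partial-fraction case split. It exploits the fact that the \emph{terms} of $\mathcal{L}_p(z)$ are exactly additive polynomials with coefficients in $\F_p[z]$, so an existential formula reduces to one equation $f(x)+H(\alpha)=u$ together with inequations and a $P_\sigma$ (your normal form is never stated). The work then splits into three substantial pieces: (i) a change of variables (``proper transformation'') that replaces $f$ by a \emph{strongly normalized} additive polynomial; (ii) Lemma~\ref{reduct}, showing that the image of a $p$-basic polynomial is all of $R$ up to a bounded correction term $\frac{1}{e^N}G(\alpha)$; and (iii) Theorem~\ref{th:Manos}, which bounds the height of any $x$ with $|f(x)|\le\ell$ for strongly normalized $f$. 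Step (iii) is proved with Hasse derivatives and a Wronskian-type linear-independence criterion over $F(z^{p^s})$, and is the technical heart of the paper.

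Your proposal replaces (i)--(iii) by a single sentence: ``if the support of a putative witness is very large, one can `compress' it by a $p$-th root or absorb part of it into a Frobenius image.'' This is where the gap lies. Taking $p$-th roots does not obviously shrink support in $R$ in a way compatible with the equation $f(x)=u$, and without the strong-normalization step there is no reason the leading terms of $f$ behave well enough for any pigeonhole to bite; indeed the paper notes explicitly that the analogous boundedness \emph{fails} over the full field $\F_p(z)$, so some structural hypothesis on $f$ is essential and must be arranged first. Likewise, your plan to ``isolate individual coefficients'' of an element of unbounded support by existential formulas presupposes exactly the uniform bound you have not yet secured. Finally, Part~2 is not quite immediate from Part~1: once a sentence is made existential, one still needs the height bound (applied to the now concrete $u\in\F_p[z]$) to replace the $R$-variables by $F$-variables before collapsing to a single $P_\sigma$.
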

We prove model-completeness by constructing an algorithm which converts 
any existential $\mathcal{L}_p(z)^e$-formula  to an equivalent, in  $\mathcal R$, universal $\mathcal{L}_p(z)^e$-formula. 
It is well known that model completeness of the theory of a countable model with a recursive elementary diagram  implies decidability of the theory (see current developments in \cite{ChubbMillerSolomon2021}), so we obtain:

\begin{corollary}
Assume that the $\mathcal L_p$-theory of the field $F$ is decidable. Then the $\mathcal{L}_p(z)^e$-theory of $R$ is decidable.
\end{corollary}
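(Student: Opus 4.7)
The plan is to deduce the corollary directly from part 2 of Theorem~\ref{main}.

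Given any $\mathcal{L}_p(z)^e$-sentence $\phi$, part 2 of Theorem~\ref{main} produces an $\mathcal{L}_p$-sentence $\sigma$ such that $\mathcal R \models \phi \leftrightarrow P_\sigma$. Since $\phi$ is a sentence, the associated $\sigma$ has no free variables, so the clause ``each element of the tuple $\alpha$ is an element of $F$'' in the interpretation of $P_\sigma$ is vacuous, and the assertion $\mathcal R \models P_\sigma$ reduces to ``$\sigma$ holds true in $F$''. Thus $\mathcal R \models \phi$ if and only if $F \models \sigma$.

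Now the hypothesis enters: the $\mathcal{L}_p$-theory of $F$ is decidable, so we can algorithmically determine whether $F \models \sigma$. Composing with the reduction $\phi \mapsto \sigma$ — which is effective because Theorem~\ref{main} is established by an explicit algorithm that rewrites every formula into an equivalent one built from predicates of the form $P_\sigma$ — yields a decision procedure for the $\mathcal{L}_p(z)^e$-theory of $\mathcal R$.

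The corollary rests almost entirely on the effectivity clause of Theorem~\ref{main}, so I do not anticipate any substantive obstacle beyond carefully matching the output of the theorem's algorithm to the input format of the assumed decision procedure for $F$. One could alternatively follow the route hinted at just before the corollary, invoking the general principle that an effectively model-complete theory with a recursive elementary diagram is decidable (with the recursiveness of the diagram supplied by the decidability of $F$), but the path via part 2 is more direct and avoids having to describe the elementary diagram of $\mathcal R$ explicitly.
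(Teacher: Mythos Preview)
Your proposal is correct. The paper does not give a separate proof of the corollary; it derives it from the sentence immediately preceding it, namely the general fact that effective model-completeness of the theory of a countable model with a recursive elementary diagram implies decidability. Your argument instead invokes part~2 of Theorem~\ref{main} directly, reducing each $\mathcal{L}_p(z)^e$-sentence to an $\mathcal{L}_p$-sentence about $F$ and then applying the assumed decision procedure for $F$.

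The two routes are close in spirit, and you explicitly note the paper's route as an alternative. Your path has the advantage of matching the stated hypothesis exactly: you only need decidability of the $\mathcal{L}_p$-\emph{theory} of $F$, whereas the paper's route, taken literally, asks for a recursive elementary diagram of $\mathcal R$, which would seem to require something closer to $F$ being a computable structure (so that atomic facts with parameters, and in particular the predicates $P_\sigma(\tilde a)$ for concrete $\tilde a\in F$, can be decided). The paper's phrasing glosses over this point; your argument sidesteps it entirely because part~2 outputs a parameter-free $\mathcal{L}_p$-sentence.
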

Item 2 of Theorem \ref{main}  says that `questions'  (first-order sentences) about $\mathcal R$ may be effectively translated into questions about $F$ 
(as a model of $\mathcal{L}_p$).
 This replies positively, for the structures $\mathcal{R}$, 
 to a `program' asked by Leonard Lipshitz: 
 {\it `Identify theories with universe a polynomial ring 
 $F[z]$ or a field of rational functions, 
 extending the structure of  addition by commonly used 
 operations and relations, which have the property that one 
 can effectively translate first-order sentences of the 
 structure into questions about $F$ and, 
 possibly, other elementary mathematical structures, e.g., 
 groups'}. This follows the type of results of J. Ax and S. Kochen for fields of $p$-adic numbers in \cite{AxKochen1965}.  

Theorem \ref{main} is, in part, a generalization of the results of \cite{PheidasZahidi2004}, where a similar theorem was proved for rings of polynomials (i.e., when $S$ is the empty set), for any perfect field $F$ (not necessarily algebraic). A similar result (model-completeness) was proved  in \cite{onay2018} by Onay for the henselization of $\F_{p}[z]$,  seen as a module over $\F_{p}[z]$ (and, more generally, over a finite field or an algebraic closure of it). The problem of whether the $\mathcal{L}_p(z)^e$-theory of the field $\F_p(z)$ or $\tilde \F_p(z)$ is model-complete, or even decidable, remains open. 

The structure of addition and the Frobenius map  is interesting, not only for its own sake, but 
also because it is connected to various important algebraic and logical problems. For example, 
the derivative of a function (polynomial, rational or power series) is positive-existentially definable 
in $\mathcal{L}_p(z)^e$ (see the Introduction of \cite{PheidasZahidi2004}). So, the structure of $R$ as a model of addition and differentiation is encodable in its $\mathcal{L}_p(z)$-structure.  

For surveys regarding the decidability properties of algebraic 
structures the reader may consult 
\cite{PheidasZahidi2000}, \cite{PheidasZahidi2008}, \cite{Poonen2008} and \cite{Koenigsmann2018}.  
For other decidability results for polynomial rings of positive characteristic the reader may consult  \cite{sirokofskich2010} - 
an analogue of results of A. Semenov for the natural numbers with addition and the set of powers of a fixed prime number - and \cite{Pheidas1985} where it is proved that the existential theory of addition and divisibility over a ring of polynomials with coefficients in an existentially decidable field is decidable. For the algebraic and model theoretic  properties of the Frobenius map see \cite{mazur1975}, \cite{ChatzidakisHrushovski1999} and \cite{Hrushovski-arx}. 

Although the general strategy of  proof of \cite{PheidasZahidi2004} 
works for the rings $R$, several of its components are quite  
difficult to adapt. 
We describe briefly the main two.
A polynomial  $f\in \F_p[z]$, in $m$ variables, is \emph{additive} 
if, for all $a$ and $b$ in $(\tilde F(z))^m$, it satisfies 
$f(a)+f(b)=f(a+b)$. Notice that the polynomial terms of the language 
$\mathcal{L}_p(z)$, with a zero constant term, are such additive polynomials. 
Here we will consider only additive polynomials with coefficients 
in $\F_p[z]$. 
Such an $f$ is called \emph{strongly normalized} if its 
coefficients are in $\F_p[z]$, the degrees of $f$ with respect to each of 
its variables is the same, $p^s$, for some $s\in \N\cup\{0\}$ and 
the degrees of its leading coefficients are pairwise inequivalent 
modulo $p^s$.   
We develop an algorithm by which questions regarding solvability of - arbitrary - additive polynomials are reduced to similar questions for strongly normalized polynomials. 
The first crucial property of a strongly normalized polynomial 
$f$ is that, for any given $y\in R$, the inverse image 
$\{ x\in R^m\ :\ f(x)=u\}$ has a bounded height. 
This is relatively easy for the case that $R$ is a polynomial ring 
and we know this is not true if $R$ is substituted by the field of 
all rational functions $\F_p(z)$. 
In order to prove it for the rings 
$R$ we use the notion of a `\emph{Hasse derivative}' and recent results 
on the relative Algebra (see Section \ref{sub:the_hasse_derivative}). So we prove:
\begin{theorem}\label{th:Manos}
Let $f$ be a strongly normalized additive polynomial of the variables $x=(x_1,\dots ,x_n)$. 
Then there is a recursive function  $h$ 
which, to each additive polynomial $f$ of the language $\mathcal{L}_p(z)$
and  each $\ell\in \N$ associates a non-negative integer $h(f,\ell)$ such that the height of each element of the set 
$\{ x\in F(z)^n\ :\ |f(x)| \leq \ell \}$ 
is less than or equal to $h(f,\ell)$.
\end{theorem}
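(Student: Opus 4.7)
The plan is to control, place-by-place over the function field $F(z)/F$, the valuations $v(x_i)$ of an arbitrary $x=(x_1,\dots,x_n)\in F(z)^n$ satisfying $|f(x)|\le \ell$; once each negative $v(x_i)$ is bounded in terms of $f$ and $\ell$, the height of each $x_i$ is controlled by the standard sum-formula over the places of $F(z)/F$. Write
\[
f(x_1,\dots,x_n)=\sum_{i=1}^{n}\sum_{j=0}^{s}a_{ij}\,x_i^{p^j},\qquad a_{ij}\in\F_p[z],
\]
with leading-in-$x_i$ coefficients $c_i:=a_{is}$ whose $z$-degrees $d_i:=\deg c_i$ are, by strong normalization, pairwise inequivalent modulo~$p^s$.

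First I would treat the place $v_\infty$ at infinity. For any $i,j$ one has $v_\infty(a_{ij}x_i^{p^j})=-\deg a_{ij}+p^{j}v_\infty(x_i)$, so as soon as $v_\infty(x_i)$ drops below a threshold depending only on the $a_{ij}$, the index $j=s$ dominates within the $i$-th block and contributes $-d_i+p^{s}v_\infty(x_i)$. Reducing these dominant valuations modulo $p^s$ yields the residues $-d_i\pmod{p^s}$, which are pairwise distinct by hypothesis; hence no cross-cancellation between different indices $i$ is possible, and the ultrametric triangle equality gives
\[
v_\infty\bigl(f(x)\bigr)=\min_{i}\bigl(-d_i+p^{s}v_\infty(x_i)\bigr)
\]
for those $i$ beyond the threshold. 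The hypothesis $|f(x)|\le\ell$ imposes an explicit lower bound on $v_\infty(f(x))$, which inverts into an explicit lower bound on every $v_\infty(x_i)$, depending only on $\ell$, $s$, and the $d_i$.

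For each finite place $v$ corresponding to an irreducible $\pi\in F[z]$, strong normalization is silent, and I would invoke the Hasse-derivative technology of Section~\ref{sub:the_hasse_derivative}. Expanding each $x_i$ in the local uniformizer $\pi$ and applying the Hasse derivatives $D_v^{(k)}$ to $f(x)$, the fact that a Frobenius power $x_i^{p^j}$ has extremely sparse support in $k$ permits the principal part of $f(x)$ at $v$ to detect the principal parts of the $x_i^{p^j}$ with a controlled, effectively computable loss; the algebraic facts of that section then provide, for each $v$, an explicit bound $v(x_i)\ge -M_v$ depending only on the $v(a_{ij})$, on $s$, and on $\ell$. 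Since only finitely many $v$ can be relevant---namely the divisors of the $a_{ij}$ together with the places where $f(x)$ can \emph{a priori} have a pole of size $\le\ell$---and these can be enumerated from the data, combining the finite-place bounds with the $v_\infty$-bound yields the desired recursive function $h(f,\ell)$.

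The main obstacle is precisely this finite-place analysis. At places dividing some leading coefficient $c_i$, the raw valuation arithmetic admits unbounded cancellations between the various $x_i^{p^j}$, so the clean mod-$p^s$ argument used at infinity has no direct analogue; the Hasse-derivative framework must be deployed carefully to extract, from a pole bound on $f(x)$, an \emph{effective} (rather than merely existential) bound on the principal parts of the $x_i$, and this is where the algebraic input of Section~\ref{sub:the_hasse_derivative} will carry the weight of the argument.
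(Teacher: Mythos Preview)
Your treatment of $v_\infty$ is correct and in fact cleaner than the paper's: you exploit the pairwise incongruence modulo $p^s$ of the degrees $d_i$---the ``strongly'' in strongly normalized---to rule out ultrametric cancellation among the dominant terms $c_ix_i^{p^s}$, whereas the paper instead transports the infinite place to an affine one via $z\mapsto 1/(z-\eta)$ and then re-invokes the affine estimate of Proposition~\ref{th:bounded_with_y}. Your route makes transparent why \emph{strong} (not mere) normalization is the natural hypothesis at this step.

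The finite-place part, however, has a genuine gap. You assert that ``only finitely many $v$ can be relevant---namely the divisors of the $a_{ij}$ together with the places where $f(x)$ can \emph{a priori} have a pole of size $\le\ell$---and these can be enumerated from the data''. Neither half holds: the hypothesis $|f(x)|\le\ell$ does not pin down where $f(x)$ has its poles, and, more seriously, the $x_i$ may have poles at places where $f(x)$ is regular, since at a finite place the leading terms $\sum_i b_ix_i^{p^s}$ \emph{can} cancel (your residue-class argument is specific to $v_\infty$). No finite list of candidate poles is obtainable from $f$ and $\ell$ alone. The paper's Proposition~\ref{th:bounded_with_y} meets this head-on: from the linear independence of the $b_i$ over $F(z^{p^s})$ it extracts, via the Wronskian criterion of Theorem~\ref{th:linear_independence}, indices $0=\epsilon_1<\cdots<\epsilon_n<p^s$ with $W:=\det\bigl(D_{\epsilon_i}(b_j)\bigr)\neq 0$; applying these \emph{global} Hasse derivatives (with respect to $z$, not a local uniformizer) to $\sum_i b_ix_i^{p^s}=u$ and using $D_\epsilon(x_i^{p^s})=0$ for $0<\epsilon<p^s$ yields a linear system in the unknowns $x_i^{p^s}$ with determinant $W$; Cramer's rule then gives $Wx_J^{p^s}=\Lambda_J$, where $\Lambda_J$ is built from lower $p$-powers of the $x_i$, their Hasse derivatives, and $D_\epsilon(y)$. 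Comparing orders at an \emph{arbitrary} affine pole $\lambda$ of any $x_i$ yields a uniform lower bound on $\ord_\lambda(x_J)$ depending only on $f$ and $\deg(y_2)$---no prior enumeration of poles is needed or possible. Your description of ``expanding each $x_i$ in the local uniformizer $\pi$ and applying $D_v^{(k)}$'' is not the mechanism actually used; the weight is carried by this global Wronskian--Cramer identity.
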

A second point where the strategy of \cite{PheidasZahidi2004} needs significant adaptations is
where we need to prove that the image of a strongly normalized polynomial,
whose leading coefficients form a basis of $F(z)$ over $F(z^{p^s})$ 
(where $p^s$ is the degree of the polynomial),  is `almost all of $R$' (in the sense of Lemma~\ref{reduct}). 
Our method works only for algebraic fields $F$ - not for general 
perfect fields.  
Item 2 of Theorem \ref{main}, even for the polynomial case,  is obviously stronger than the results of \cite{PheidasZahidi2004} and its proof requires some improvements in the logical treatment of the subject - see  the proof of Section~\ref{sub:proof_of_theorem}.

In the rest of this section, we fix our notation and give a sketch of the proof. 
Section~\ref{sub:reduct_normal} contains some necessary algebraic results.
We prove Theorem~\ref{th:Manos} in Section~\ref{sub:the_hasse_derivative}.
and Theorem \ref{main} in Section \ref{sub:proof_of_theorem}. 
Some tedious elementary proofs are gathered in the Appendix.  


\subsection{Notation, Definitions and some elementary algebraic Facts}\label{sketch}

We use the following notations and definitions.

\begin{enumerate}
\item $\N$ is the set of natural numbers.
\item $p$ is a prime number, $\F_{p}$ is a field 
with $p$ elements, $\tilde{\F_p}$ is an algebraic closure of $\F_{p}$. 
\item $F$ is a field of characteristic $p$ such that
 $\F_p\subseteq F\subseteq \tilde \F_p$.
\item $z$ is a variable, $F[z]$ is the ring of polynomials of $z$ with coefficients in $F$ and $F(z)$ is its field of quotients, i.e., rational functions of $z$ with coefficients in $F$.
\item $S$ is a finite set $S=\{s_1,\dots, s_\nu\}$ of irreducible elements of $\F_p[z]$, which we assume to remain irreducible as elements of $F[z]$.
\item $R$ is the ring $F[z,s_1^{-1},\ldots, s_\nu^{-1}]$, i.e., the ring generated over $F[z]$ by the inverses of the elements of $S$.
 \item The language $\mathcal L_p$ is defined as $\mathcal{L}_p := \{ +, x \mapsto x^p,=, 0,1\}$. Its symbols are interpreted as follows: $+$ denotes addition, $x \mapsto x^p$ denotes the \emph{Frobenius map}, $=$ denotes 
 equality, $0$ and $1$ are constant symbols for the obvious elements of $\F_p$.  
 \item The language $\mathcal L_p(z)$ is defined by
 $\mathcal{L}_p(z) =\mathcal{L}_p \cup  \{\in F, 
 x \mapsto zx\}$  where $x \mapsto zx$ is the multiplication-by-$z$ map, and $\in F$ is a unary predicate denoting belonging  to $F$.
 \item If $\alpha =(\alpha_1,\dots ,\alpha_m)$ is a tuple of the variables $\alpha_i$, then $\alpha\in F$  denotes the relation $\wedge_{i=1}^m \alpha_i\in F$.  
 \item The language $\mathcal L_p(z)^e$ is the extension of 
 $\mathcal{L}_p(z)$ by the predicates $P_\sigma$, where 
 the index $\sigma$ ranges over the set of formulas 
 $\sigma(a_1,\dots,a_n)$ of $\mathcal{L}_p$. We assume that all the free variables of 
 $\sigma$ are among $a_1,\dots,a_n$. The predicate $P_\sigma$ 
 associated to $\sigma(a_1,\dots,a_n)$ is interpreted as 
 the $n$-ary relation ``$a_1,\dots,a_n \in F$ 
 and $\sigma(a_1,\dots,a_n)$ is true in $F$''.

 \item $\mathcal R$ denotes the model of the language 
 $\mathcal L_p(z)^e$, with universe $R$, with symbols interpreted as above.
 \item A variable ranging only over $F$ will be called an \emph{$F$-variable}.
\item  For $x\in F(z)$, if $Q$ is an irreducible element of $F[z]$, 
we write $ord_Q(x)$ for the \emph{order} of $x$ at $Q$. 
Recall that by convention $ord_Q (0)=\infty$. 
If $Q=z-\rho$, with $\rho\in F$, then we write $ord_\rho$ instead of $ord_Q$.
The \emph{order at infinity} of $x$, denoted $ord_\infty (x)$, 
is the degree of the denominator of $x$ minus the degree of its numerator. 
\item An \emph{additive polynomial} is a polynomial of the form 
\begin{equation}
  f(x)=\sum_{i=1}^n f_i(x_i),
\end{equation}
where $x=(x_1,\ldots,x_n)$ and, for each $i$, 
\begin{equation}
  f_i(x_i)= b_{i}x_i^{p^{s(i)}}+
\sum_{j=1}^{s(i)} c_{i,j}x_i^{p^{s(i)-j}},
\end{equation}
with $b_{i},c_{i,j}\in\F_p[z]$. The \emph{degree} of $f$ is $\deg(f):=\displaystyle\max_i\{p^{s(i)}\}$.
\item We say that the additive polynomial $f(x)$ is a polynomial 
\emph{of all the variables} of the tuple $x=(x_1,\dots ,x_m)$ whenever the degree of $f$ in each of the variables $x_i$ is positive, for $i=1,\dots, m$.
\item For $s\in\N$, let $\mathcal{V}_s(\F_p)$ be $\F_p(z)$ considered as a vector space over the field $\F_p(z^{p^s})$. 
Respectively,  let $\mathcal{V}_s(F)$  be $F(z)$
considered as a vector space over the field $F(z^{p^s})$. 
\item An additive polynomial $f$ is called 
\emph{normalized} if all degrees $s(i)$ are equal and the set of leading coefficients $\{b_i\ :\ i=1,\dots,n\}$ is linearly independent 
over $\mathcal{V}_s(\F_p)$. 
An additive polynomial is called \emph{$p$-basic} if it is normalized and the set $\{b_{1}, \dots, b_{n}\}$ forms a basis for $\mathcal{V}_s(\F_p)$.
Moreover, $f$ is \emph{strongly normalized} if $f$ is normalized and 
the degrees of the $b_i$ are pair-wise inequivalent modulo $p^s$, 
where $p^s$ is the degree of $f$.

\item
\begin{definition} 
\label{proper_transformation}
  A \emph{proper transformation} is a tuple 
  $\xi=(\xi_1$,$\dots$,$\xi_n)$ such that each $\xi_i(X,\beta)$ 
  is an additive polynomial of the variables 
  $x_1,\dots,x_m$, $\beta_1,\dots,\beta_{\mu}$ and such that the 
  map $\xi$ defined by\\ 
  \begin{tabular}{rccp{3cm}} 
  $\xi$: & $R^m \times F^\mu$ & $\rightarrow$ & $R^n$ \\
  & $(x_1,\dots,x_m,\beta_1,\dots,\beta_\mu)$ & $\mapsto$ & 
  $(\xi_1(x_1,\dots,x_m,\beta_1,\dots,\beta_\mu), \dots$, $ \xi_n(x_1,\dots,x_m,\beta_1,\dots,\beta_\mu))$\\
  \end{tabular}
  \\
  is surjective. Also note that the composition of proper 
  transformations is again a proper transformation.
\end{definition} 
We will user proper transformations in order to change variables.
\item Let $x$ be an tuple of variables and $f$ be an additive 
polynomial of the variables of $x$. 
When all the variables of the additive polynomial $H$ are 
$F$-variables then we write 
 $Im_{F}(H):=\{ y\in R\ |\ \exists \alpha\in F\ H(\alpha )= y\}$. 
We write $Im(f):=\{ y\in R\ |\ \exists x\in R\ f(x)=y\}$. 
\item A \emph{bounded term} is any expression of the form 
$\frac{1}{e}G(\alpha)$, where $e\in \F_p [z]$ and $G(\alpha )$ is 
an additive polynomial of the tuple of $F$-variables $\alpha$. 
Notice that a bounded term is not always a term of the language 
$\mathcal{L}_p(z)^e$ but we will be writing them with the 
understanding that `clearing of the denominators' is performed 
immediately - this makes sense because the only relation symbol 
of $\mathcal{L}_p(z)^e$, apart from $\in F$, is equality. 
We write $Im_F(\frac{1}{e}G)$ for $\frac{1}{e}Im_F(G)$.
\item The \emph{height} of a rational function $u=\frac{a}{b}\in F(z)$, where $a,b\in F[z]$ are coprime, is $|u| := \max\{ \deg(a),\deg (b)\}$. The height of $0$ is not defined.
\item The {\it partial fraction decomposition} of a rational function, from elementary algebra, is given by: 
\begin{fact}\label{partialFractions}
Let $x\in F(z)$ and let $Q_1,\dots ,Q_r$ be all the monic, irreducible polynomials of $F[z]$ at which $x$ has a pole. Then $x$ can be written  in a unique way as
\begin{align}
x=g(z)+\sum_{i,j} \frac{d_{i,j}(z)}{Q_i^j},
\end{align}
where the index $i$ ranges in the set $\{1,\dots ,r\}$ and, for each $i$, the index $j$  ranges in a non-empty subset of $\N$. Each of $g(z)$ and $d_{i,j}(z)$ is in $F[z]$ and for each $(i,j)$ the degree of the polynomial $d_{i,j}(z)$ is less than the degree of $Q_i$.   
\end{fact}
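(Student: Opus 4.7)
The plan is to construct the decomposition in two stages --- first isolate the poles via a coprime factorization of the denominator, then perform $Q_i$-adic expansions of the resulting numerators --- and to verify uniqueness by comparing orders at each irreducible factor.

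For existence, I would first write $x = a/b$ with $a, b \in F[z]$ coprime and $b$ monic. Factoring $b = \prod_{i=1}^{r} Q_i^{e_i}$ into distinct monic irreducibles, coprimality of $a$ and $b$ guarantees that the $Q_i$ are precisely the monic irreducibles at which $x$ has a pole, with $e_i = -ord_{Q_i}(x)$. Since the $Q_i^{e_i}$ are pairwise coprime in the principal ideal domain $F[z]$, iterated application of B\'ezout's identity produces polynomials $N_1, \dots, N_r \in F[z]$ with $x = \sum_{i=1}^{r} N_i/Q_i^{e_i}$. Next, for each $i$, I would perform repeated Euclidean division of $N_i$ by $Q_i$: this yields polynomials $d_{i,1}, \dots, d_{i,e_i} \in F[z]$, each of degree strictly less than $\deg Q_i$, together with a polynomial $P_i \in F[z]$, such that $N_i = \sum_{j=1}^{e_i} d_{i,j} Q_i^{e_i - j} + P_i Q_i^{e_i}$. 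Dividing through by $Q_i^{e_i}$ and setting $g(z) := \sum_i P_i$ gives the required decomposition, after discarding any $(i,j)$ for which $d_{i,j}=0$.

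For uniqueness, I would subtract two decompositions to obtain an identity $g(z) + \sum_{i,j} d_{i,j}/Q_i^j = 0$ with $\deg d_{i,j} < \deg Q_i$ for each $(i,j)$. Assume for contradiction that some $d_{i,j}$ is nonzero; fix an index $i_0$ for which this occurs and let $j_0$ be the largest $j$ with $d_{i_0, j_0}\neq 0$. Since $\deg d_{i_0, j_0} < \deg Q_{i_0}$ forces $Q_{i_0} \nmid d_{i_0, j_0}$, we have $ord_{Q_{i_0}}(d_{i_0, j_0}/Q_{i_0}^{j_0}) = -j_0$, while every other summand on the left has $Q_{i_0}$-order at least $-(j_0-1)$. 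Applying $ord_{Q_{i_0}}$ to the identity then yields the impossible inequality $-j_0 \ge -(j_0-1)$, so all $d_{i,j}$ must vanish, whence $g=0$ as well.

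The argument relies only on $F[z]$ being a Euclidean domain, so no serious obstacle is expected; the only point demanding attention is confirming that the set of denominators $\{Q_1,\dots,Q_r\}$ produced by the B\'ezout step coincides with the actual set of poles of $x$, which is arranged by choosing the coprime representation $a/b$ at the outset.
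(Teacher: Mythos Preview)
Your argument is correct and is the standard textbook proof of partial fraction decomposition over a Euclidean domain. The paper, however, does not supply a proof of this Fact at all: it is introduced with the phrase ``from elementary algebra'' and simply stated without justification, so there is no paper proof to compare against. Your write-up would serve perfectly well as the omitted verification; the only cosmetic point worth tightening is to note explicitly that $d_{i,e_i}\neq 0$ for each $i$ (since $Q_i\nmid N_i$, as otherwise $x$ would have a pole of order strictly less than $e_i$ at $Q_i$), so that the index set for each $i$ is indeed non-empty as the statement requires.
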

\end{enumerate}

\begin{fact}\label{ex}
(a) Let $u\in R$ and $c\in F[z]$ with $\deg (c)=d>0$. 
Then there is a $v\in R$ and a polynomial $r$, of degree 
less than $d$, such that $u=vc+r$. 
If no irreducible factor (over $F[z]$) of $c$ is invertible in $R$, 
then $v$ and $r$ are unique.     

(b) Given $u\in R$, $c\in F[z]\setminus \{ 0\}$ and $N\in \N$, there are $v\in R$ and $r_i\in F[z]$, for $i=0,\dots ,N$, with $deg (r_i)<deg(c)$ such that 
$$
u=r_0+r_1c+\dots +r_{N}c^N+vc^{N+1}\ .
$$

(c) Let $Q\in \F_p[z]$ which is not divisible by any polynomial that is
irreducible in $F[z]$ and 
invertible in $R$, of degree $d\geq 1$. Then, for any $x\in R$, the formula $x\not\in F$ is equivalent in $\mathcal R$ to 
\begin{gather}
\exists y \exists \alpha_0\dots \exists \alpha_{d-1} \\
[\alpha_0,\dots ,\alpha_{d-1}\in F\wedge x=yQ+\alpha_{d-1} z^{d-1}+\dots \alpha_1 z+\alpha _0\wedge (y\ne 0\vee \bigvee _{i=1}^{d-1} \alpha_i\ne 0)] \notag
\end{gather}
\end{fact}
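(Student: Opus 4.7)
The plan is to obtain (a) by combining the Bezout identity in $F[z]$ with the description of $R$ as a localization, then derive (b) by iteration and (c) from the uniqueness clause of (a). The delicate point throughout is the uniqueness in (a), which requires pairing the irreducibility of each $s_i$ in $F[z]$ with the hypothesis on $c$.

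For the existence half of (a), I would write $u = a/b \in R$ with $b = s_1^{n_1}\cdots s_\nu^{n_\nu}$ (possible because $R = F[z, S^{-1}]$) and factor $c = c_1 c_2$, where $c_1$ is the greatest divisor of $c$ lying in the multiplicative set generated by $S$ and $c_2$ is coprime to every $s_i$. Then $c_1$ is a unit of $R$ and $\gcd(b, c_2) = 1$ in $F[z]$, so Bezout yields $\alpha,\beta \in F[z]$ with $\alpha b + \beta c_2 = 1$; multiplying by $u$ gives $u = \alpha a + \beta c_2 u$. Standard polynomial division writes $\alpha a = \gamma c_2 + r$ with $r \in F[z]$ and $\deg r < \deg c_2 \le d$, whence $u = \bigl((\gamma + \beta u)/c_1\bigr) c + r$ has the required form. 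For uniqueness, if $u = v_1 c + r_1 = v_2 c + r_2$ then $(v_1 - v_2) c = r_2 - r_1 \in F[z]$; writing $v_1 - v_2 = A/B$ in lowest terms with $B$ a product of powers of the $s_i$ (this uses $R = F[z,S^{-1}]$) and $\gcd(A,B) = 1$, we get $B \mid c$ in $F[z]$. Since each irreducible factor of $B$ is invertible in $R$ while no irreducible factor of $c$ is, $B$ must be a unit, so $v_1 - v_2 \in F[z]$, and the degree bound forces $v_1 = v_2$ and $r_1 = r_2$.

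Part (b) then follows by iterating (a) without the uniqueness hypothesis: set $u_0 = u$ and recursively write $u_i = u_{i+1} c + r_i$ using the existence half, so after $N+1$ steps back-substitution produces the desired expansion with $v = u_{N+1}$. For (c), I apply (a) to $x$ and $Q$, noting that the hypothesis on $Q$ is exactly what is required to invoke uniqueness. If $x \notin F$, (a) writes $x = yQ + \alpha_{d-1}z^{d-1} + \cdots + \alpha_0$ with $\alpha_i \in F$, and not all of $y, \alpha_1, \ldots, \alpha_{d-1}$ can vanish, since otherwise $x = \alpha_0 \in F$. Conversely, if the existential formula holds and $x \in F$, then uniqueness applied to the two decompositions $x = 0 \cdot Q + x$ and $x = yQ + \alpha_{d-1}z^{d-1} + \cdots + \alpha_0$ forces $y = 0$ and $\alpha_1 = \cdots = \alpha_{d-1} = 0$, contradicting the nontriviality disjunct. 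The main obstacle, modest as it is, is keeping track in the uniqueness argument of how the hypotheses on $S$ (irreducibility in $F[z]$) and on $c$ (no factor invertible in $R$) together kill the denominator $B$.
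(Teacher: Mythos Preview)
Your proof is correct and follows essentially the same route as the paper: Bezout plus Euclidean division for existence in (a), clearing denominators for uniqueness, iteration for (b), and reading (c) off from the uniqueness clause of (a). One minor difference worth noting: for existence in (a) the paper applies Bezout directly to $b$ and $c$ to obtain $v'c+r'b=1$, which tacitly assumes $\gcd(b,c)=1$; your factorization $c=c_1c_2$ with $c_1$ in the multiplicative set generated by $S$ and $c_2$ coprime to $S$ is a slightly more careful way of handling the general case, since it guarantees $\gcd(b,c_2)=1$ before invoking Bezout.
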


The proof is elementary. For completeness, we include one  
in Section~\ref{sub:proof_of_fact}.
 
\begin{fact}\label{one_equation}
A system of equations may substituted by one equation due to the equivalence $(x=0\wedge y=0)\leftrightarrow x^p+zy^p=0$.
\end{fact}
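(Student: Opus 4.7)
The plan is to verify the stated biconditional at the level of the field $F(z)$ (which contains $\mathcal R$ as a subring) and then note that an arbitrary finite conjunction of equations can be collapsed by iterating the equivalence.

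The forward direction $(x=0 \wedge y = 0) \Rightarrow x^p + z y^p = 0$ is immediate. For the converse, suppose $x, y \in R \subseteq F(z)$ satisfy $x^p + z y^p = 0$. If $y = 0$, then $x^p = 0$ in the integral domain $F(z)$, forcing $x = 0$. So assume toward contradiction that $y \neq 0$. Then $(x/y)^p = -z$ in $F(z)$, so it suffices to show that $z$ is not a $p$-th power in $F(z)$. Writing $x/y = a/b$ in lowest terms with $a,b \in F[z]$, we would have $a^p = -z b^p$. Taking $\ord_0$, the order at $z=0$, the left-hand side has order $p \cdot \ord_0(a) \equiv 0 \pmod p$, while the right-hand side has order $1 + p\cdot\ord_0(b) \equiv 1 \pmod p$. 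Since $p \geq 2$, this is a contradiction, so $y=0$ and hence $x = 0$.

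To handle an arbitrary conjunction $e_1 = 0 \wedge e_2 = 0 \wedge \dots \wedge e_k = 0$, I would iterate the equivalence: replace the first two conjuncts by $e_1^p + z e_2^p = 0$, then combine the resulting equation with $e_3 = 0$ the same way, and so on. After $k-1$ applications one obtains a single equation in $R$ equivalent to the original system. Note also that the new equation is built using only the symbols $+$, $x \mapsto x^p$, and $x \mapsto zx$ of $\mathcal{L}_p(z)$, so the reduction stays inside the language.

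The only genuine content here is the irreducibility-type observation that $z$ is not a $p$-th power in $F(z)$, which the $\ord_0$ computation handles uniformly in the field $F$; there is no serious obstacle.
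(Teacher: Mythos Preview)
Your argument is correct. The paper states this Fact without proof, treating it as elementary, so there is nothing to compare against; your valuation computation at $z=0$ is a clean way to see that $z$ is not a $p$-th power in $F(z)$, and one could equally well invoke the paper's Fact~\ref{onto1} (that $\{1,z,\dots,z^{p-1}\}$ is a basis of $F(z)$ over $F(z^p)$) to conclude directly that $x^p+zy^p=0$ forces $x^p=y^p=0$.
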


\begin{fact}\label{onto1} 
Let $F$ be a perfect field. Let $s\in \N\cup\{0\}$. Then:
\begin{itemize}
\item The set $\{ z^i\ |\ 0\leq i\leq p^{s}-1\}$ is a basis of both $\mathcal V_s(\F_p)$ and  
$\mathcal V_s(F)$. 
\item Let  $B$ be a subset of $\F_p(z)$ which is linearly independent in 
$\mathcal V_s(\F_p)$. 
Then it is also linearly independent  in $\mathcal V_s(F)$. 
Consequently, any basis of $\mathcal V_s (\F_p)$ is also a 
basis of $\mathcal V_s (F)$. 
\end{itemize}
\end{fact}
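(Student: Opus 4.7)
The plan is to handle the two bullets in turn, with the first serving as the main ingredient for the second. For the first bullet, I would show that $X^{p^s}-z^{p^s}$ is the minimal polynomial of $z$ over $F(z^{p^s})$ by applying Eisenstein's criterion in $F[w][X]$, where $w=z^{p^s}$: since $w$ is transcendental over $F$, it is a prime element of the UFD $F[w]$, and $X^{p^s}-w$ satisfies the Eisenstein conditions at $w$ (leading coefficient $1$, all lower coefficients divisible by $w$, constant term not divisible by $w^{2}$). Hence $[F(z):F(z^{p^s})]=p^s$ and $\{1,z,\ldots,z^{p^s-1}\}$ is a basis. Specializing to $F=\F_p$ gives the same conclusion for $\mathcal{V}_s(\F_p)$.

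For the second bullet I would use this common basis together with a straightforward rank argument. Let $\{b_1,\ldots,b_n\}\subseteq \F_p(z)$ be linearly independent over $\F_p(z^{p^s})$ and write each $b_i=\sum_{j=0}^{p^s-1}c_{i,j}z^j$ uniquely, with $c_{i,j}\in\F_p(z^{p^s})$. Linear independence of the $b_i$ over $\F_p(z^{p^s})$ is equivalent to the matrix $C=(c_{i,j})$ having $\F_p(z^{p^s})$-rank equal to $n$, i.e.\ to the existence of an $n\times n$ submatrix with nonzero determinant $\Delta\in\F_p(z^{p^s})$. Now suppose $\sum_i\alpha_i b_i=0$ for some $\alpha_i\in F(z^{p^s})$. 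Substituting the expansions and using the first bullet applied to $F$, I would equate coefficients in the basis $\{1,z,\ldots,z^{p^s-1}\}$ to get a homogeneous linear system $C^{\top}\alpha=0$ over $F(z^{p^s})$. The element $\Delta$ is nonzero already in the subfield $\F_p(z^{p^s})$, hence also in $F(z^{p^s})$, so the corresponding $n\times n$ subsystem has only the trivial solution and all $\alpha_i$ vanish, giving linear independence of the $b_i$ over $F(z^{p^s})$.

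The last clause, that any basis of $\mathcal{V}_s(\F_p)$ is a basis of $\mathcal{V}_s(F)$, follows immediately by dimension count: such a basis has size $p^s=\dim_{F(z^{p^s})}F(z)$ and is linearly independent over $F(z^{p^s})$ by what was just shown. I do not expect any serious obstacle; the whole argument reduces to the linear disjointness of $F(z^{p^s})$ and $\F_p(z)$ over $\F_p(z^{p^s})$ inside $F(z)$, witnessed by the common basis $\{1,z,\ldots,z^{p^s-1}\}$, and the only nontrivial step is the Eisenstein irreducibility, which is standard.
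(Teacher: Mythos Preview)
Your argument is correct. The Eisenstein step for $X^{p^{s}}-w$ over $F[w]$ with $w=z^{p^{s}}$ is valid because $w$ is transcendental over $F$ and hence prime in $F[w]$, and the rank/determinant argument for the second bullet is the standard linear-disjointness reasoning; nothing hinges on perfectness beyond what you actually use.

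As for comparison: the paper does not supply its own proof of this Fact. It is stated as a basic algebraic fact and left without justification (the Appendix contains proofs of Fact~\ref{ex}, Propositions~\ref{logic1} and~\ref{logic2}, and Lemma~\ref{lem:normalized-R}, but not of Fact~\ref{onto1}). Your write-up is exactly the kind of short argument one would expect to back it up.
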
 


\subsection{Existential Formulas} 
\label{sub:existential_formulas}
Let $u$ be a term of $\mathcal{L}_p(z)^e$. From Fact~\ref{ex}, the formula  
$u \notin F$ can be substituted by the equivalent 
$\exists \alpha,x [\alpha \in F \wedge u= \alpha_0+\alpha_1z+\dots  +\alpha_{r-1}z^{r-1}+g x \  \wedge x \neq 0]$, where $g$ is an irreducible polynomial  in $F[z]$, of degree $r$,  which is not in $S$.
If $u$ is a term but not a variable, we can replace the 
formula $u \in F$ by the equivalent $\exists \alpha [\alpha \in F \wedge u= \alpha]$. Therefore, any formula is equivalent in $\mathcal R$ to a formula in which the negation of the predicate-symbol $\in F$ does not occur  and in which $\in F$ is applied only to variables. 
Such a formula of $\mathcal L_p(z)^e$, where all quantified variables 
range over $F$ will be called a \emph{bounded formula} (cf. \cite{PheidasZahidi2004}, p. 1021). 

\begin{fact}\label{boundedTerms} 
The set $\{ x\in R\backslash\{0\} \ :\ |x|\leq k\}\cup \{0\}$ is 
definable in $\mathcal L_p(z)$ by a \emph{bounded existential formula}, 
i.e., one which 
is existential and its quantified variables are all $F$-variables.
\end{fact}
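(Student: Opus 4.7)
The plan is to exhibit the defining bounded existential formula directly, by parametrizing elements of $R$ of bounded height according to the exponents appearing in the denominator. Recall that every nonzero $x\in R$ admits a unique coprime representation $x=a/b$ with $a\in F[z]$ and $b$ a product $s_1^{e_1}\cdots s_\nu^{e_\nu}$ of the invertible irreducibles, since the only irreducible factors of $F[z]$ that become invertible in $R$ are the $s_i$. The condition $|x|\leq k$ translates to $\deg(a)\leq k$ and $\sum_i e_i\deg(s_i)\leq k$, and the set $E_k:=\{(e_1,\dots,e_\nu)\in\N^\nu\ :\ \sum_i e_i\deg(s_i)\leq k\}$ is finite.

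For each fixed tuple $e=(e_1,\dots,e_\nu)\in E_k$, the statement ``$x$ admits a representation with denominator $s_1^{e_1}\cdots s_\nu^{e_\nu}$ and numerator of degree at most $k$'' can be written as the bounded existential formula
\[
\varphi_e(x):\ \exists \alpha_0,\dots,\alpha_k\ \Bigl[\bigwedge_{i=0}^{k}\alpha_i\in F\ \wedge\ s_1^{e_1}\cdots s_\nu^{e_\nu}\cdot x=\alpha_0+\alpha_1 z+\cdots+\alpha_k z^k\Bigr].
\]
The crucial point is that both sides of this equation are legitimate terms of $\mathcal{L}_p(z)$: multiplication by any polynomial in $\F_p[z]$ is obtained by combining the symbols $x\mapsto zx$, addition, and the constant $1$ (which generates all of $\F_p$ by repeated addition), and the right-hand side is built analogously from the $F$-variables $\alpha_i$.

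The full defining formula is then the disjunction $\bigvee_{e\in E_k}\varphi_e(x)$, which, after renaming the $\alpha_i$ separately for each $e$ and pulling the existential quantifiers to the front, is itself a single bounded existential formula. For correctness, every $x\in R$ with $|x|\leq k$ satisfies some $\varphi_e$ (take $e$ equal to the exponents of $s_i$ in the coprime denominator), and the value $x=0$ satisfies every $\varphi_e$ trivially with all $\alpha_i=0$. Conversely, if some $\varphi_e$ holds, then $x$ has an (a priori non-coprime) representation in which both numerator and denominator have degree at most $k$, so its coprime form — which can only have smaller degrees — has height at most $k$.

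There is no genuine obstacle here; the only subtlety worth flagging is verifying that multiplication by the specific polynomials $s_i^{e_i}\in\F_p[z]$ is expressible as a term of $\mathcal{L}_p(z)$ rather than requiring the extra structure of general multiplication. This is handled by observing that the language already contains $x\mapsto zx$ and the constants $0,1$, which together with addition suffice to realize multiplication by any fixed element of $\F_p[z]$.
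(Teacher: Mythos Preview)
Your proof is correct. The paper states this Fact without supplying a proof, so there is no argument to compare against; your explicit construction---parametrizing by the finite set of denominator exponent tuples $e\in E_k$ and expressing each case as an $\mathcal{L}_p(z)$-equation in $F$-variables---is exactly the natural way to fill this gap, and your verification of both directions (including the observation that passing to the coprime representation only lowers degrees) is sound.
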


We leave to the reader to verify that an existential formula of 
$\mathcal L_p(z)^e$  is equivalent in ${\mathcal R}$ to a disjunction 
of formulas of the form:
\begin{equation}\label{existential}
\phi (u,\{ v_j\}_{j\in J})\ :\ \exists x,\alpha\ 
[\alpha \in F \wedge  \psi (x,\alpha )]\ ,
 \end{equation}
 where 
\begin{equation}\label{psi}
 \psi(x,\alpha)\ :\  
 f(x) + H(\alpha) = u
  \wedge_{j \in J} e_j(x) + G_j(\alpha) \neq v_j \wedge P_\sigma (\alpha) \ .
\end{equation}
under the conventions:
\begin{itemize}
\item $x=(x_1,\dots ,x_m)$ is a tuple of the variables $x_i$.
\item   $\alpha$  is a tuple of $F$-variables, each of them distinct from each variable of $x$.
\item $f$ is an additive polynomial of all the variables of $x$ (hence, by convention, we have $deg_{x_i}(f)>0$ for every $i$), with coefficients in $\F_p[z]$.
\item Each $e_j$ is an additive polynomial of some of the variables of $x$. 
\item $H$ is an additive polynomial in some of the variables of $\alpha$.
\item Each $G_j$ is an additive polynomial  in some of the variables of $\alpha$. 
\item $u$ and the $v_j$ are terms of $\mathcal L_p(z)$.
\item No variables among those of $x$ or $\alpha$ occurs in $u$ or any of the $v_j$.
\item The predicate symbol $P_{\sigma}(\alpha )$ may have more variables than those of $\alpha$ occurring in it.
\end{itemize}



\subsection{Overview of this article.} 
\label{sub:paper_overview}

We start with a formula $\phi $ as above. 
In Section~\ref{sub:reduct_normal}, we show that, through a certain type of change of variables (see \emph{proper transformation} in Definition~\ref{proper_transformation}), 
one may assume that the  polynomial $f$ is strongly normalized
or the zero polynomial. The way to do this is similar to that of \cite{PheidasZahidi2004}, but several details have to 
be adjusted to the new rings $R$. 
In particular, Lemma~\ref{reduct} 
constitutes a new approach.
In Section~\ref{sub:the_hasse_derivative}, we prove  Theorem~\ref{th:Manos}. 
This allow us to substitute sets of the form 
$\{x\in F(z): |f(x)|\leq k\}$ by sets of the form
$\{x\in F(z): |x|\leq h\}$, where $k,h\in\N$.
Finally, in Section~\ref{sub:proof_of_theorem}, 
we use the previous steps in order to show that $\phi$ is equivalent 
to a universal formula.


\section{Strongly normalized polynomials and properties}
\label{sub:reduct_normal}


The following lemmas will be used in the proof of Theorem~\ref{th:Manos}, 
In this Section, we prove Theorem~\ref{th:Manos}.


\begin{lemma}[counterpart of \cite{PheidasZahidi2004}, Lemmas 3.3 and 3.4]
\label{lem:normalized-R}
Let $f$ be an additive polynomial in $m_0$ variables, 
with coefficients in $\F_p[z]$. 
Then, there is a proper transformation 
$\xi : R^{m} \times F^k \rightarrow R^{m_0}$, a strongly 
normalized additive polynomial $\tilde f$ in $m$ variables, 
with coefficients in $\F_p[z]$ and an additive polynomial $G$ 
in only $F$-variables, each one of them distinct from the 
variables of $\tilde f$, such that:
\begin{itemize}
\item $f \circ \xi=\tilde{f}+G$,
\item $m\le m_0$,
\item $\deg(\tilde{f}) \leq \deg(f)$,
\item $Im(f)=Im(\tilde{f})+Im_F(G)$,
\item $\xi$ and $ \tilde{f}$ are effectively computable from $f$.
\end{itemize}
\end{lemma}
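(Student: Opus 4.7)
The plan is to mirror the three-stage strategy of Lemmas~3.3 and~3.4 of \cite{PheidasZahidi2004}, adapting each substitution so that it is surjective onto $R$ rather than onto $F[z]$. Let $s=\max_i s(i)$ and $k_i=s-s(i)$. Since $F$ is perfect, every $a\in F[z]$ admits a unique decomposition $a=\sum_{j=0}^{p^{k_i}-1}z^j a_j^{p^{k_i}}$ with $a_j\in F[z]$, in the basis $\{z^j\}$ of $F[z]$ over $(F[z])^{p^{k_i}}=F[z^{p^{k_i}}]$. For $u=a/e\in R$ with $e$ a product of elements of $S$, I would rewrite $u=(a\,e^{p^{k_i}-1})/e^{p^{k_i}}$ and absorb $1/e$ into the $p^{k_i}$-th powers, expressing $u=\sum_{j=0}^{p^{k_i}-1}z^j y_j^{p^{k_i}}$ with $y_j\in R$. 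Hence the substitution $x_i=\sum_j z^j y_{i,j}^{p^{k_i}}$ is a surjection $R^{p^{k_i}}\twoheadrightarrow R$, and applying it to $f$ produces an additive polynomial of degree $p^s$ whose leading monomials have coefficients in $\{\,b_i z^{j p^{s(i)}}\,\}\subset\F_p[z]$.

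I would then impose linear independence of the leading coefficients over $\F_p(z^{p^s})$: while the current leading coefficients $b_1',\dots,b_n'$ satisfy a non-trivial relation $\sum_i \tilde q_i^{p^s} b_i'=0$ with $\tilde q_i\in\F_p[z]$, the substitution $y_i\mapsto y_i+\tilde q_i w$ (for a fresh $w$) cancels the $w^{p^s}$-coefficient exactly, pushing $w$ into the sub-leading part; solving for one $y_i$ via Fact~\ref{ex} eliminates a leading variable, and iterating yields independence. Finally, as long as two leading coefficients $b_i',b_j'$ satisfy $\deg b_i'\equiv\deg b_j'\pmod{p^s}$, say $\deg b_i'-\deg b_j'=k p^s$, I pick $\gamma\in\F_p^\times$ with $\mathrm{lc}(b_i')+\gamma\,\mathrm{lc}(b_j')=0$ (both leading coefficients lie in $\F_p^\times$) and apply the invertible substitution $y_j\mapsto y_j+\gamma z^k y_i$, which strictly decreases $\deg b_i'$ while preserving independence. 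Termination is guaranteed because these leading degrees form a strictly descending sequence.

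Call the resulting polynomial $\tilde f$. Define $\xi$ as the composition of the substitutions from the three stages; each step is surjective by construction, so $\xi$ is a proper transformation in the sense of Definition~\ref{proper_transformation}, and the additive polynomial $G$ in $F$-variables absorbs the $F$-valued residues that arise when Stage~2 uses Fact~\ref{ex} to solve for an eliminated variable. The identity $f\circ\xi=\tilde f+G$ and the bound $\deg\tilde f\le\deg f$ are immediate by construction; $Im(f)=Im(\tilde f)+Im_F(G)$ follows from surjectivity of $\xi$; and the bound $m\le m_0$ comes from the fact that each variable added by the Stage~1 expansion is compensated by at least one elimination in Stage~2 before the process terminates.

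The main obstacle, absent from the polynomial setting of \cite{PheidasZahidi2004}, is that every substitution must remain inside the ring $R$. The denominator-clearing trick in Stage~1 is essential because $p^k$-th roots are not available in $R$ once the denominator involves elements of $S$; likewise, the elimination step in Stage~2 must select a $\tilde q_i$ that is a unit of $R$, or manufacture one by multiplication and reduction modulo elements of $S$, rather than inverting an arbitrary polynomial in $\F_p[z]$. Keeping careful track of which denominators lie in the multiplicative set generated by $S$ throughout the chain of substitutions is where the real work of the adaptation from the polynomial ring to $R$ lies.
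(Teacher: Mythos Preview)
Your three-stage strategy is recognizably the right one, and your Stage~1 argument (the denominator trick showing $x_i\mapsto\sum_j z^j y_{i,j}^{p^{k_i}}$ is onto $R$) is exactly the point that is genuinely new over $R$ compared to $F[z]$. However, the \emph{order} in which you run the first two stages is reversed from the paper, and this creates a gap you do not close.

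The paper performs the dependency reduction \emph{before} expanding to uniform degree. It works directly with the mixed-degree polynomial $f=\sum_i f_i(x_i)$, $\deg_{x_i}f=p^{s_i}$, and tests linear dependence of the \emph{virtual} leading set $B=\{\,b_i z^{jp^{s_i}}:0\le j<p^{s-s_i}\,\}$ in $\mathcal V_s(\F_p)$. When $B$ is dependent via coefficients $c_{i,j}$, the paper substitutes $x_1=c\,y_1+H$ (with $c=\sum_\mu c_{1,\mu}^{p^{s-s_1}}z^\mu$ and $H$ a generic polynomial of degree $<\deg c$ in fresh $F$-variables) and shifts the remaining $x_i$ by terms in $y_1$. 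This keeps the same number of $R$-variables while strictly lowering $\sum_i p^{s_i}$, giving termination. Only once $B$ is independent does the paper expand, and then the result is already normalized.

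In your order, Stage~2 produces a leftover variable $w$ of degree strictly less than $p^s$: after ``$y_i\mapsto y_i+\tilde q_i w$'' and the elimination of one $y_j$, the surviving $y_i'$ still sit at degree $p^s$ but $w$ does not. You then move to Stage~3 as if the polynomial were normalized, but it is not --- the $w$'s are still there, ranging over $R$, and cannot be absorbed into $G$ (which takes only $F$-variables). Re-expanding each $w$ via Stage~1 restores uniform degree but destroys the obvious termination measure, and you give no argument that this loop halts. The paper's ordering sidesteps the issue entirely.

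Finally, your closing paragraph misidentifies the obstacle in Stage~2. You do \emph{not} need any $\tilde q_i$ to be a unit of $R$, nor do you need to massage denominators through $S$. The mechanism is exactly the one the paper uses: for \emph{any} nonzero $c\in\F_p[z]$, Fact~\ref{ex}(a) says the map $(y,\alpha_0,\dots,\alpha_{\ell-1})\mapsto cy+\sum_k\alpha_k z^k$ from $R\times F^{\deg c}$ onto $R$ is surjective, and the $F$-residue $H$ is precisely where the polynomial $G$ comes from. The adaptation from $F[z]$ to $R$ costs nothing at this stage --- the only $R$-specific step is your Stage~1 denominator trick, which you already have.
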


The proof is similar to those of Lemma 3.3 and 3.4 of 
\cite{PheidasZahidi2004}. For completeness, we provide outlines 
in Section~\ref{sub:proof_of_lemma_lem:normalized-R}.

\begin{lemma}\label{basic}
For any strongly normalized additive polynomial $\tilde f$ in the 
variables of $x =(x_1,\dots, x_m)$, ranging over $R$, there is an 
additive polynomial $h$, in the variables of $v=(v_1,\dots, v_{p^s-m})$, also ranging over $R$, each one distinct from the variables of $x$, such that $\tilde f + h$ is  $p$-basic and strongly normalized. 
\end{lemma}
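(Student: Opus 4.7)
The plan is to fill in the ``missing residues'' for the degrees of leading coefficients by adding monomial terms $z^{r_j} v_j^{p^s}$.

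Let $p^s$ be the degree of $\tilde f$, and write $\tilde f = \sum_{i=1}^{m} b_i x_i^{p^s} + (\text{lower degree terms in each }x_i)$. Since $\tilde f$ is strongly normalized, the integers $\deg(b_1),\dots,\deg(b_m)$ are pairwise inequivalent modulo $p^s$. Let
\[
A=\{\deg(b_1) \bmod p^s,\dots,\deg(b_m) \bmod p^s\} \subseteq \{0,1,\dots,p^s-1\},
\]
so $|A|=m\le p^s$. Enumerate the complement $\{0,1,\dots,p^s-1\}\setminus A$ as $\{r_1,\dots,r_{p^s-m}\}$ and define
\[
h(v_1,\dots,v_{p^s-m}) \;=\; \sum_{j=1}^{p^s-m} z^{r_j} v_j^{p^s}.
\]
This $h$ is an additive polynomial in fresh variables $v_1,\dots,v_{p^s-m}$, all of degree $p^s$, and its leading coefficients $z^{r_j}$ have degree $r_j$. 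Consequently $\tilde f+h$ has all $p^s$ leading coefficients
$b_1,\dots,b_m,z^{r_1},\dots,z^{r_{p^s-m}}$, whose degrees together realize every residue class modulo $p^s$ exactly once. This shows that $\tilde f+h$ is strongly normalized (all degrees equal to $p^s$ and leading-coefficient degrees pairwise inequivalent mod $p^s$).

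It then remains to verify that the $p^s$ elements $b_1,\dots,b_m,z^{r_1},\dots,z^{r_{p^s-m}}$ are linearly independent over $\F_p(z^{p^s})$; since $\dim_{\F_p(z^{p^s})}\mathcal{V}_s(\F_p)=p^s$ by Fact \ref{onto1}, independence forces them to be a basis, yielding $p$-basicity. For this, suppose we had a nontrivial relation $\sum_i c_i(z^{p^s}) b_i + \sum_j c'_j(z^{p^s}) z^{r_j} = 0$ with $c_i,c'_j\in \F_p(z^{p^s})$. After clearing denominators (also a polynomial in $z^{p^s}$), every summand that is nonzero becomes a polynomial whose degree is $\equiv \deg(b_i)\pmod{p^s}$ or $\equiv r_j\pmod{p^s}$ respectively, because $\deg c_i(z^{p^s})$ and $\deg c'_j(z^{p^s})$ are multiples of $p^s$. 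By construction all these residues are pairwise distinct, so the nonzero summands have pairwise distinct total degrees and cannot cancel.

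The main (and essentially only) non-routine step is the degree-mod-$p^s$ argument for linear independence; once that is in hand, the rest is a direct construction of $h$ and a check of the definitions. Finally, one notes that $r_1,\dots,r_{p^s-m}$, hence $h$, are computable directly from the degrees of the $b_i$, so the construction is effective.
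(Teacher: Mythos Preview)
Your proof is correct. The paper does not supply its own argument here; it simply refers to Lemma~3.5(a) of \cite{PheidasZahidi2004}. Your construction---completing the missing residue classes with monomials $z^{r_j}v_j^{p^s}$ and then using the degree-mod-$p^s$ argument to obtain linear independence over $\F_p(z^{p^s})$---is the natural one and is essentially what that cited lemma does.
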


For a proof, see Lemma 3.5(a) of \cite{PheidasZahidi2004}.

\begin{definition}
Let $f$ an additive polynomial over $F(z)$, in $m$ variables. 
We define the relation $\sim_f$ over $F(z)$ by 
\begin{equation}
u\sim_f u' \mbox{ if and only if }  u'-u=f(\tilde{x})  
\text{, for some } \tilde{x}\in R^m.
\end{equation}  
\end{definition}

\begin{lemma}
\label{reduct} 
Assume that $F\subset \tilde \F_p$. Then there is a  primitive recursive function $E_{ord}$ from the set of $p$-basic polynomials of 
$\mathcal{L}_p(z)$  to $\N$ such that, for any $p$-basic polynomial 
$f$ over $\F_p[z]$ the following hold:
\begin{enumerate}
\item  For any  $u \in R$, there exists $u' \in R$ with the following properties:
\begin{itemize}
  \item $u \sim_f u'$.
  \item Each pole of $u'$ is a pole of $u$ - the word `pole' includes the pole at infinity.
  \item For all $\rho \in \tilde \F_p \cup \{ \infty\}$, we have 
  $|ord_\rho (u')| \leq E_{ord}(f)$. 
\end{itemize} 
\item Assume that $f$ has degree $p^s$, with $s\geq 0$. 
Write $N=E_{ord}(f)$, let $\alpha =(\alpha_0,\dots,$ $\alpha_{N-1})$ 
be a tuple of distinct $F$-variables and 
$G(\alpha):=\sum_{i=0}^{N-1} \alpha_i z^i$.  Then,
\begin{equation}
  R=Im(f)+ Im_F(\frac{1}{e^N}G),
\end{equation}
where  $e$ is the product of all elements of $S$.  
\end{enumerate}
\end{lemma}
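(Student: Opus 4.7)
The plan is to prove Part 1 first, by iteratively reducing the pole orders of $u \in R$ via $\sim_f$-equivalence, and then derive Part 2 by a finite-dimensional analysis in the space of bounded-order elements. The reduction at a single pole $\rho\in S\cup\{\infty\}$ will divide $|ord_\rho(u)|$ by $p$ per iteration, so that $O(\log_p|ord_\rho(u)|)$ steps reduce it to a constant depending only on $f$.

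The local reduction at a finite pole $Q\in S$ proceeds as follows. Let $K_Q$ denote the completion of $F(z)$ at $Q$; its residue field $k_Q=F[z]/(Q)$ is algebraic over $\F_p$ (using $F\subseteq\tilde\F_p$), hence perfect. Then $[K_Q:K_Q^{p^s}]=p^s$, and I claim that $\{b_1,\ldots,b_{p^s}\}$ remains a basis of $K_Q$ over $K_Q^{p^s}$. This follows from Fact~\ref{onto1} combined with a purely inseparable disjointness argument: $F(z^{1/p^s})$ and $K_Q$ are linearly disjoint over $F(z)$, because no $p$-th root of $z$ exists in $K_Q$ (a direct check with the Laurent expansion at $Q$ shows that $(1+\pi\cdot(\text{unit}))^{1/p}\notin K_Q$). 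With this basis in hand, decompose $u=\sum_i b_i y_i^{p^s}$ in $K_Q$; estimating the change-of-basis matrix at $Q$ yields $|ord_Q(y_i)|\leq |ord_Q(u)|/p^s + C(f,Q)$. Truncate each $y_i$'s Laurent expansion at $Q$ to obtain $\tilde y_i\in F[z,Q^{-1}]\subseteq R$, whose only finite pole is at $Q$. The leading part $\sum b_i\tilde y_i^{p^s}$ then matches $u$ at $Q$ up to the controlled truncation error, while the lower-order terms $c_{i,j}\tilde y_i^{p^{s-j}}$ of $f$ contribute poles at $Q$ of order at most $|ord_Q(u)|/p + O(1)$. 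Hence $u-f(\tilde y)$ satisfies $|ord_Q(\cdot)|\leq |ord_Q(u)|/p + O(1)$.

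Globalization: process the finite poles $Q\in S$ one at a time---reductions at distinct primes do not interfere since each $\tilde y_i$ used at $Q$ has finite poles only at $Q$---and then handle the pole at $\infty$ last using polynomial inputs $x_i\in F[z]$, to which an entirely parallel local argument in $F((1/z))$ applies and which cannot introduce any new finite poles. This yields $u'\sim_f u$ with $|ord_\rho(u')|\leq E_{ord}(f)$ for every $\rho$, where $E_{ord}(f)$ is primitive recursive in $f$ (the maximum of the local constants $C(f,\rho)$ over the finitely many $\rho\in S\cup\{\infty\}$, combined with the logarithmic iteration counts).

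For Part 2, any $u\in R$ is first reduced via Part 1 to $u'\in V$, where $V\subset R$ is the finite-dimensional $F$-subspace of elements with $|ord_\rho(\cdot)|\leq N$ everywhere; note $Im_F(\frac{1}{e^N}G)\subseteq V$. A final pass within $V$---writing $u'e^N$ as a polynomial and reducing its degree modulo the image of $f$ restricted to polynomial inputs, once more using the basis property of $\{b_i\}$ together with perfectness of $F$ to solve the residual linear systems over $F$---establishes $u'\equiv\frac{G(\alpha)}{e^N}\pmod{Im(f)}$ for some $\alpha\in F^N$, hence $R=Im(f)+Im_F(\frac{1}{e^N}G)$. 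The main obstacles will be: (i) justifying the local basis property at each $K_\rho$ via purely inseparable disjointness and showing that the constants $C(f,\rho)$ can be aggregated into a primitive recursive $E_{ord}(f)$; and (ii) executing the final polynomial reduction for Part 2 and verifying that the resulting numerator in fact has degree strictly less than $N$.
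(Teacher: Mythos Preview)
Your Part 1 strategy via completions $K_Q$ is genuinely different from the paper's, and with care it can be made to work. The paper never passes to completions. Instead it works globally: from $B=AZ$ with $\Delta=\det A\in\F_p[z^q]$ one has $\Delta z^i=\sum_j e_{ij}^q b_j$, so any $v=a/Q^\ell$ with $\Delta\mid a$ can be written as $\sum_j b_j\tilde x_j^q$ with $\tilde x_j\in F[z,Q^{-1}]\subset R$, giving $v\sim_f -g(\tilde x)$ of pole order $\le(\ell+r)/p$. To force $\Delta\mid(\text{numerator})$ for an arbitrary partial-fraction term, the paper uses that the roots of $\Delta$ lie in some $\F_{p^m}$ (here $F\subset\tilde\F_p$ enters) and that $Q\in\F_p[z]$, so $\Delta\mid Q^{p^{m+m_0}}-Q^{p^{m_0}}$; the identity $a/Q^\ell=-a(Q^{p^{m+m_0}}-Q^{p^{m_0}})/Q^{\ell+p^{m_0}}+aQ^{p^m}/Q^\ell$ then does the job. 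This yields an $E_{ord}(f)$ depending only on $f$, whereas your local change-of-basis constants $C(f,Q)$ depend on $Q\in S$.

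Two steps in your sketch are wrong as written. First, your justification that $\{b_i\}$ is a $K_Q^{p^s}$-basis of $K_Q$ via ``$(1+\pi\cdot\text{unit})^{1/p}\notin K_Q$'' is false: e.g.\ $1+Q^p=(1+Q)^p$. What you need, and what is true, is $z\notin K_Q^p$; in the expansion $z=\bar z+c_1Q+\cdots$ one has $c_1=1/Q'(\bar z)\ne0$ since $Q$ is separable. Cleaner still: $\{1,Q,\ldots,Q^{p^s-1}\}\subset F(z)$ is already a basis of $K_Q/K_Q^{p^s}$, so $F(z)\cdot K_Q^{p^s}=K_Q$ and any $F(z^{p^s})$-basis of $F(z)$ works. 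Second, ``truncating the Laurent expansion of $y_i$'' does not literally give an element of $F[z,Q^{-1}]$ when $\deg Q>1$: the Laurent coefficients lie in the Teichm\"uller copy of $k_Q\subset K_Q$, which is not inside $F[z]$. You must instead invoke the isomorphism $F[z]/(Q^M)\simeq\mathcal O_{K_Q}/(Q^M)$ to lift the principal part to $F[z,Q^{-1}]$.

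Your Part 2 argument has a genuine gap. Reducing the degree of $u'e^N$ modulo $Im(f|_{F[z]^m})$ yields $u'e^N-f(x)=G(\alpha)$ with $x\in F[z]^m$; dividing by $e^N$ gives $u'=f(x)/e^N+G(\alpha)/e^N$, but $f(x)/e^N$ is \emph{not} in $Im(f)$, since $f$ is only $\F_p$-linear, not $R$-linear, and there is no $y\in R^m$ with $f(y)=f(x)/e^N$ in general. You cannot clear $e^N$ this way. (The paper's own ``Item 2 follows clearly'' is also terse; note that the space of $u'\in R$ satisfying the bounds of Item~1 has $F$-dimension roughly $N(\deg e+1)$, larger than $\dim Im_F(G/e^N)=N$, so some further reduction inside $Im(f)$ is implicitly being used.)
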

\begin{proof} 
Consider an arbitrary $s\geq 0$, write $q=p^s$ and 
consider a $p$-basic polynomial  
$f(x)=\sum_{i=1}^{q}b_ix_i^q+g(x)$, of degree $q$, with coefficients in $\F_p[z]$, 
where $x=(x_1,\dots ,x_q)$ is the tuple of variables of $f$ and $g$ is an additive polynomial in $x$, of degree less than $q$, hence at most $q/p$. 

Since $f$ is $p$-basic, both $\{z^i\ :\ 0\leq i\leq q-1\}$ and $\{b_1,\dots ,b_q\}$ are bases of $\mathcal{V}_s(\F_p)$. 
Therefore, there is a non-singular matrix $A$ such that $B=AZ$, 
where  $Z$ and $B$ are the vectors of the $z^i$ and the $b_i$ 
respectively - in some order.
 Obviously $A$ has entries in $\F_p[z^q]$. 
Write $\Delta$ for the determinant of $A$. 
Using Cramer's Rule to solve the system  $B=AZ$ for the 
variable $z^i$, we obtain that, for any $i\in\{0,\dots ,q-1\}$, 
there are $e_{i,j}\in \F_p[z]$ such that 
\begin{equation}\label{eq:change_basis}
  \Delta z^i =\sum_{j=1}^q e_{i,j}^q b_j.
\end{equation}

We will now prove this auxiliary claim.
\begin{claim}\label{claim:D_divides}
Let $v=\frac{a}{Q^\ell}$, where $a,Q\in F[z]$, 
 $\ell\in \N$ and $Q$ is a monic, irreducible polynomial over $F[z]$.
 If $\Delta$ divides $a$ then there exists a $v'\in F[z]$ such that 
\begin{enumerate}\label{bounds3}
\item  $v\sim_f v'$,
\item the poles of $v'$ are roots of $Q$ and,
\item $|ord_Q ( v' )|\leq 
\frac{\ell+r}{p} =  \frac{1}{p}(|ord_Q(v)|+r)$, where $r$ is the least 
non-negative integer such that $p|\ell + r$.
\end{enumerate} 
\end{claim}

\begin{proof}
 Let $r$ be  the least non-negative integer such that $\ell+r=kq$ 
 for some $k\in\N$. Clearly $0\leq r<q$. 
 Since $\Delta$ divides $a$, we write $aQ^r=\Delta a'$ for some $a'\in F[z]$. 
 By Fact \ref{onto1}, there are $a_i\in F[z]$ so that 
 $$
 a' =\sum_{i=0}^{p^s-1} a_i ^qz^i.
 $$
 Then, using \ref{eq:change_basis}, there are $e_{i,j}\in F[z]$ such that, for each $i=0,\dots q-1$ we have 
 $\Delta z^i =\sum_{j=1}^q e_{i,j}^q b_j$,
 hence 
 $$
 v=\frac{\sum_{i=0}^{q-1}  a_i^q \Delta z^i }{Q^{kq}}= \frac{\sum_{i=0}^{q-1} \sum_{j=1}^q   a_i^q e_{i,j}^q b_j }{Q^{kq}}=\sum_{j=1}^q b_j  \tilde{x}_j^q,
 $$
 where 
 $\tilde x_j:=\frac{1}{Q^{k}}\sum_{i=0}^{q-1} a_i e_{i,j}$. Setting $\tilde x:=(\tilde x_1,\dots ,\tilde x_q)$, we have 
 $v=f(\tilde x)-g(\tilde x)$. Clearly, every $\tilde x_j$ is in $R$, 
 since $Q$ is an element of $S$ hence invertible in $R$. 
 Furthermore, observe that the order of $g(\tilde x)$ at $Q$ is at 
 least $-k\frac{q}{p}$, since the coefficients of $g$ are in 
 $\F_p[z]$.  So we set $v' = -g(\tilde x)$ and the proof is complete.
\end{proof}

 We now proceed in the proof of Item 1 of the Lemma. 
 Consider any $u\in F(z)$. 
 Write it as partial fractions, i.e., as sum of terms of the form 
 $u_{Q, \ell}=\frac{a}{Q^\ell}$ or $h$, where $a,h,Q\in F[z]$, 
 $\ell\in \N$ and $Q$ is a monic,  irreducible polynomial over $F[z]$. 
 For each term $u_{Q, \ell}$, we will find a  $u'_{Q, \ell}\in F(z)$, 
 whose poles (except the one at infinity) are roots of $Q$, with 
 order  bounded by $E_{ord}$, to be determined,
 such that 
 $$
 u_{Q, \ell}\sim_f u'_{Q, \ell}.
 $$
 Obviously $\sim_f$ is an equivalence and additive relation, 
 so $u$ will be related by $\sim_f$ to the sum 
 $\hat u=\sum _{Q,\ell} u'_{Q, \ell}+h$. 
 Finally, we  will treat the polynomial part $h$ of $\hat u$ in a 
 similar way, to obtain a $u'$ with the required properties.



Since $\Delta$ is in $\F_p[z]$, there is a finite extension 
$\F_{p^m}$ of $\F_p$ in which $\Delta$, as a polynomial in $z$, 
splits into linear factors (and $m\geq 1$). 
Then, the product of the distinct irreducible (over $\F_p[z]$) 
factors of $\Delta$ 
divides $z^{p^m}-z$, in $\F_p[z]$. 
Taking some high enough power $p^{m_0}$ of $p$, such that 
the multiplicity of each irreducible factors of $\Delta$ 
is $\leq  p^{m_0}$ we have that $\Delta$ divides 
$(z^{p^m}-z)^{p^{m_0}}= z^{p^{m+m_0}}-z^{p^{m_0}}$. 

 We rewrite the term $u_{Q,\ell}$ as
 \begin{equation}
   u_{Q,\ell}=\frac{a}{Q^{\ell}}=
  -\frac{a (Q^{p^{m+m_0}}-Q^{p^{m_0}})}{Q^{\ell+p^{m_0}}}+
  \frac{aQ^{p^{m}}}{Q^{\ell}}.
 \end{equation}
 
 Let $u_1:=-\frac{a (Q^{p^{m+m_0}}-Q^{p^{m_0}})}{Q^{\ell+p^{m_0}}}$ 
 and $u_2:=\frac{aQ^{p^{m}}}{Q^{\ell}}= Q^{p^{m}}u_{Q,\ell}$. 
 Since $m>0$, we have that the order of $u_2$ at $Q$ 
 is greater than the order of $u_{Q,\ell}$. 
 Moreover, the numerator of $u_1$ is divisible by $\Delta$, based on the 
 remarks of the previous paragraph. 
 Hence, by Claim~\ref{claim:D_divides}, there is a 
 $u'_1\in F[z,\frac{1}{Q}]$ such that $u_1\sim_f u'_1$ such that $|\ord_Q(u'_1)|\leq  \frac{1}{p}(\ell+p^{m_0}+r)$, 
 where $r$ is the least non-negative integer such that $p|\ell+p^{m_0}+r$. 

 Note that if $\frac{1}{p}(\ell+p^{m_0}+r)\geq \ell$ then 
 $p^{m_0}+r \geq (p-1)\ell$ and therefore, $\ell< \frac{p^{m_0}+q}{p-1}$,  
 since $r<q$. 
 Consequently, if  $|ord_Q(u_1)| = \ell\geq \frac{p^{m_0}+q}{p-1}$ 
 then $\ord_Q(u'_1)< -\ell =\ord_Q(u_1)$.
 In this case, we have $u_{Q,\ell}\sim_f u'_{Q,\ell}:= u'_1+u_2$ 
 and $|ord_Q(u'_{Q,\ell})|<|ord_Q(u_{Q,\ell})|$.
 
 After iterating this procedure for all terms $u_{Q,\ell}$ of $u$, 
 and adding the equivalency relations, we will end up with 
\begin{equation}
u = \sum_{Q,\ell}u_{Q,\ell} +  h\sim_f u' := \sum_{Q,\ell}u'_{Q,\ell} + h 
\end{equation}

Applying the partial fractions decomposition to $u'$, we obtain
\begin{equation}
u'= \sum_{Q,\ell}\hat{u}_{Q,\ell} + \hat{h} + h,
\end{equation}
where $\hat{u}_{Q,\ell}\in F(z)$ and $\hat{h}\in F[z]$. 
Using Lemma~3.5(d) of \cite{PheidasZahidi2004} for 
the polynomial $\omega:=\hat{h} + h$, there is a $\hat{\omega}\in F[z]$ such that 
$\omega\sim_f\hat{\omega}$ and $|\ord_{\infty}(\hat{\omega})|\le\Omega$, where
$\Omega$ is a bound depending only on $f$.  Define $E_{ord}$ to be 
\begin{equation}
E_{ord}(f):=\max\{\frac{p^{m_0}+q}{p-1},\Omega\},
\end{equation}

and Item 1 of the Lemma is proved. Item 2 follows clearly. 
\end{proof}

{\bf Note:} As seen by its proof, Lemma \ref{reduct} is true for 
any subring of $\tilde \F_p(z)$ in the place of $R$. 
Moreover, observe that the condition $F\subset \tilde \F_p$ seems to 
be essential.




\section{The order of poles of strongly normalized polynomials is bounded} 
\label{sub:the_hasse_derivative}

In this Section, we prove Theorem~\ref{th:Manos}. For 
the rest of this Section, $F$ is an 
algebraically closed field of characteristic $p>0$.

\subsection{Hasse derivatives and properties.} 
\label{sub:hasse_derivatives_and_properties_}

Hasse derivative or \emph{hyperderivative} \cite{Hasse:1936fw,schmidt1976equations} is a generalization 
of the derivative for fields of rational functions. It is especially
useful in positive characteristic. 
The $\epsilon-th$ Hasse derivative of $z^j$ is defined as
\begin{equation}\label{eq:hasse_definition}
  D_\epsilon(z^j) := \binom{j}{\epsilon} z^{j-\epsilon}, 
  \text{ for $j\geq 0$.}
\end{equation}
For $\epsilon=0$, $D_0$ is the identity function and 
we write $D$ instead of $D_1$.
In zero characteristic, it holds that  
\begin{equation}
  D_\epsilon(z^j) = \frac{1}{\epsilon!} \left(\frac{d}{dz}z^{j}\right).
\end{equation}

A set of useful properties is provided in 
\cite{Jeong:2011bh}. Some of these properties are - $f,g$ 
are rational functions of the variable $z$ over a field $F$:

\begin{description}
  \item[P1] The hyperderivative is linear, i.e., 
  \begin{equation}\label{eq:linearity_derivative}
    D_\epsilon(f+g) = D_\epsilon(f)+D_\epsilon(g), 
  \end{equation}  
  \item[P2] The hyperderivative satisfies the Leibniz product formula: 
  \begin{equation}\label{eq:product_rule}
    D_\epsilon(fg) = \sum_{i+j=\epsilon}D_i(f)D_j(g), 
  \end{equation}
  where $f,g\in F(z)$ and $i,j,\epsilon\geq 0$.
  \item[P3] If $p>0$ is the characteristic of $F$, 
  $m,\epsilon\in\N$ and 
  $f\in F(z)$, then
  \begin{equation}\label{eq:pth_power_derivative}
    D_\epsilon(f^{p^m}) = 
    \begin{cases}
    (D_j(f))^{p^m} &\mbox{ if } \epsilon=jp^m,\\
    0 &\mbox{ if } \epsilon\not\equiv 0 \mbox{ (mod } p^m).
    \end{cases}
  \end{equation}
  \item[P4] For $\epsilon\in\N$ and $0\neq f\in F(z)$,  
  \begin{equation}\label{eq:quotient_derivative}
    D_\epsilon\left(\frac{1}{f}\right) = \sum_{j=1}^{\epsilon} \frac{(-1)^j}{f^{j+1}} 
    \sum_{\substack{i_1,\ldots,i_j\geq 1\\i_1+\cdots+i_j=\epsilon}}
    D_{i_1}(f)\cdot\cdot\cdot D_{i_j}(f).
  \end{equation}
\end{description}

We will apply the above for a field $F$ of characteristic $p$. 
Let $q=p^s$ for some $s\in\N$. Let $\epsilon 
\in\{1,2,\ldots,p^s-1\}$. Since $\epsilon \not\equiv 0 \mod p^s$, 
we conclude that, for every $f\in F(z)$, we have
\begin{equation}\label{eq:derivation_constants}
  D_{\epsilon}(f^{p^s}) = 0, \quad \forall 
  \epsilon \in \{1,\dots, q-1\}.
\end{equation}

Let $\mathcal{B}=\{\beta_1,\dots,\beta_q\}$ be a basis of $F(z)$ 
over $F(z^{q})$ then, for every $g\in F(z)$, there exist 
$g_1,\dots, g_q \in F(z)$ such that
\begin{equation}\label{eq:projections_to_a_base} 
  g = g_1^q\beta_1 + \cdots + g_q^q\beta_q.
\end{equation}
Then due to \ref{eq:linearity_derivative}, 
\ref{eq:product_rule} and \ref{eq:derivation_constants}, it 
holds that 
\begin{equation}\label{eq:derivative_projection}
  D_\epsilon(g) = g_1^q D_\epsilon(\beta_1) + \cdots + 
  g_q^q D_\epsilon(\beta_q), 
  \quad \forall \epsilon \in \{1,\dots, q-1\}.
\end{equation}


\subsection{Linear independence criterion in positive characteristic} 
\label{sub:linear_independence_criterion_in_positive_characteristic}

Now, we present a theorem between Wronskians and linearly independent
sets of functions similar to the well known theorem of Calculus.
The corresponding theorem regarding the linear independence and the 
respective Wronskian (with hyperderivatives instead of classic ones),
for fields of positive characteristic, 
was initially described in \cite{Garcia:1987cs} and strengthened in 
\cite{Wang:1999ep}. 
Recall that we work over $F$ which is a field of characteristic $p>0$.

\begin{theorem}\label{th:linear_independence} [Th.1, \cite{Garcia:1987cs}] Let  $x_1,\dots,x_n\in F(z)$. Then 
$x_1,\dots,x_n$ are linearly independent over $\mathcal{V}_s(F)$ if and only if
there exist integers 
$0=\epsilon_1 <\epsilon_2 <\cdots <\epsilon_n < p^s$ with 
$\det\left( D_{\epsilon_i}(x_j) \right) \neq 0$.
\end{theorem}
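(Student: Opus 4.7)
The plan is to reduce both directions to linear algebra over $F(z)$, relying on the key observation that for every $\epsilon$ with $0 < \epsilon < p^s$, the operator $D_\epsilon$ annihilates the subfield $F(z^{p^s})$. Indeed, for $j \geq 0$ Lucas' theorem gives $\binom{jp^s}{\epsilon} \equiv 0 \pmod{p}$ when $0 < \epsilon < p^s$, so (\ref{eq:hasse_definition}) yields $D_\epsilon(z^{jp^s}) = 0$; combined with properties P1 and P4, this shows $D_\epsilon$ vanishes on all of $F(z^{p^s})$.

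For the direction $(\Leftarrow)$, assume $\det(D_{\epsilon_i}(x_j)) \neq 0$ and suppose toward a contradiction that $\sum_{j=1}^n c_j x_j = 0$ for some $c_1, \dots, c_n \in F(z^{p^s})$, not all zero. Apply $D_{\epsilon_i}$ to both sides and expand via P1 and P2: since every $a$ with $0 < a \leq \epsilon_i < p^s$ gives $D_a(c_j) = 0$ by the key observation, only the $a = 0$ summands survive, producing the linear system
\begin{equation}
\sum_{j=1}^n c_j\, D_{\epsilon_i}(x_j) = 0, \qquad i = 1, \dots, n,
\end{equation}
whose coefficient matrix is invertible by hypothesis. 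Hence every $c_j = 0$, a contradiction.

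For the direction $(\Rightarrow)$, fix the basis $\beta_k := z^{k-1}$, $1 \leq k \leq p^s$, of $F(z)$ over $F(z^{p^s})$ provided by Fact~\ref{onto1}, and decompose each $x_j = \sum_{k=1}^{p^s} c_{jk}\beta_k$ with $c_{jk} \in F(z^{p^s})$. The same vanishing of the $D_a(c_{jk})$ for $0 < a < p^s$ yields $D_\epsilon(x_j) = \sum_k c_{jk}\, D_\epsilon(\beta_k)$ for every $0 \leq \epsilon < p^s$. In matrix form, the $p^s \times n$ Wronskian $A = (D_\epsilon(x_j))_{\epsilon,j}$ factors as $A = B\, C^T$, where $B = (D_\epsilon(z^{k-1}))_{\epsilon,k}$ is a $p^s \times p^s$ upper-triangular matrix with $1$'s along the diagonal (hence $\det B = 1$) and $C = (c_{jk})_{j,k}$ is an $n \times p^s$ matrix. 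Linear independence of the $x_j$ over $F(z^{p^s})$ is equivalent to $C$ having rank $n$, so $A = B\,C^T$ also has rank $n$; consequently some $n$ of its rows form an invertible submatrix.

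It remains to ensure the smallest index can be taken to be $\epsilon_1 = 0$. The top row of $A$ is $(x_1, \dots, x_n)$, which is nonzero because the $x_j$ are independent; by a Steinitz exchange applied to any basis of the rank-$n$ row span of $A$ extracted from its rows, the top row can be swapped into such a basis, yielding indices $0 = \epsilon_1 < \epsilon_2 < \cdots < \epsilon_n < p^s$ with invertible submatrix. I expect the main technical hurdle in $(\Rightarrow)$ to be identifying the factorization $A = B\,C^T$ through a basis of $F(z)/F(z^{p^s})$ and recognizing that $B$ is triangular in the monomial basis $\{z^{k-1}\}$; once that is in hand, the rank argument and the $\epsilon_1 = 0$ normalization are routine.
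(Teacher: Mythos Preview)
The paper does not supply its own proof of this theorem: it is quoted verbatim as Theorem~1 of \cite{Garcia:1987cs} and used as a black box (via Corollary~\ref{cor:basis}) in the proof of Proposition~\ref{th:bounded_with_y}. So there is no in-paper argument to compare against.

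Your argument is correct and self-contained. The $(\Leftarrow)$ direction is the standard one. For $(\Rightarrow)$, your factorization $A=B\,C^{T}$ through the monomial basis $\{1,z,\dots,z^{p^{s}-1}\}$ is exactly the right idea: $B=(D_{\epsilon}(z^{k-1}))$ is unit upper-triangular because $D_{\epsilon}(z^{k-1})=\binom{k-1}{\epsilon}z^{k-1-\epsilon}$ vanishes for $k-1<\epsilon$ and equals $1$ on the diagonal, so $\det B=1$; and rank is preserved under field extension, so $\operatorname{rank}_{F(z^{p^{s}})}C=n$ forces $\operatorname{rank}_{F(z)}A=n$. The Steinitz swap to guarantee $\epsilon_{1}=0$ is fine since the first row $(x_{1},\dots,x_{n})$ is nonzero. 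One cosmetic remark: rather than invoking P1 and P4 for the key vanishing $D_{\epsilon}|_{F(z^{p^{s}})}=0$, you could cite P3 directly, since $F$ is perfect here and hence $F(z^{p^{s}})=F(z)^{p^{s}}$; either route works.
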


Since $\mathcal{V}_s(F)$ is $F(z)$, seen as a vector space over 
$F(z^{p^s})$, it follows that:

\begin{corollary}\label{cor:basis}
The set 
$\{b_1,b_2,\dots,b_n\}\subset F(z)$ is linearly independent 
over $\mathcal{V}_s(F)$ 
if and only if there exist integers 
$0=\epsilon_1 <\epsilon_2 <\cdots <\epsilon_n < p^s$ with 
$\det\left( D_{\epsilon_i}(b_j) \right) \neq 0$.
\end{corollary}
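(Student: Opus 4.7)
The plan is essentially immediate: Corollary \ref{cor:basis} is just Theorem \ref{th:linear_independence} specialized to the tuple $(b_1,\ldots,b_n)$. By the definition given in item 15 of Section \ref{sketch}, $\mathcal V_s(F)$ \emph{is} the vector space $F(z)$ over the ground field $F(z^{p^s})$. Therefore the hypothesis ``$\{b_1,\ldots,b_n\}$ linearly independent over $\mathcal V_s(F)$'' in the corollary literally coincides with the hypothesis ``$b_1,\ldots,b_n$ linearly independent over $\mathcal V_s(F)$'' in the theorem (after renaming the $x_j$ as $b_j$), and the determinant condition transfers verbatim. So my ``proof'' consists of exactly one sentence: apply Theorem \ref{th:linear_independence} with $x_j := b_j$.

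The only reason to state this as a separate corollary is organizational. In Section \ref{sub:the_hasse_derivative} the criterion will be invoked for the specific tuple of leading coefficients $(b_1,\ldots,b_q)$ of a strongly normalized additive polynomial of degree $q = p^s$. By definition of ``normalized'' these $b_i$ are $\mathcal V_s(\F_p)$-linearly independent, and the second bullet of Fact \ref{onto1} upgrades this to $\mathcal V_s(F)$-linear independence (this is where the assumption $F\subseteq \tilde\F_p$, hence $F$ perfect, is silently used). The corollary then delivers exponents $0=\epsilon_1<\cdots<\epsilon_q<p^s$ with $\det\bigl(D_{\epsilon_i}(b_j)\bigr)\neq 0$, which is the Wronskian-type invertibility we need downstream.

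There is no genuine obstacle. The one thing worth double-checking while writing the proof is that the indexing conventions for the hyperderivatives $D_\epsilon$ fixed in Section \ref{sub:hasse_derivatives_and_properties_} — in particular property P3 on $p$-th powers — match those of \cite{Garcia:1987cs}, since the statement of Theorem \ref{th:linear_independence} is being imported from there. Once this is confirmed, the corollary requires no computation beyond the renaming of symbols.
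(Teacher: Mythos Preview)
Your proposal is correct and matches the paper's approach exactly: the corollary is stated immediately after Theorem~\ref{th:linear_independence} with the one-line justification ``Since $\mathcal{V}_s(F)$ is $F(z)$, seen as a vector space over $F(z^{p^s})$, it follows that:'', which is precisely your renaming $x_j := b_j$. Your additional remarks about Fact~\ref{onto1} and the downstream use are accurate context but not part of the proof itself.
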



\subsection{The orders of hyperderivatives} 
\label{sub:the_order_of_hyperderivatives}

Let $u\in F(z)\backslash\{0\}$. Fix $\lambda\in F$ and 
drop the subscript in $\ord_\lambda(u)$ for convenience.
We will give an upper bound of $\ord(D_\epsilon(u))$.
Initially, we consider 
the base 
$\{1,z-\lambda,(z-\lambda)^2,\ldots, (z-\lambda)^{q-1}\}$ of $F(z)$ over 
$F(z^q)$ and rewrite $u$ as
\begin{equation}\label{eq:original_u}
  u = u_0^q+(z-\lambda)u_1^q+(z-\lambda)^2u_2^q+\cdots+(z-\lambda)^{q-1}u_{q-1}^q,
\end{equation}
for some $u_0,u_1,\ldots$,$u_{q-1} \in F(z)$. 
Next, we apply the $\epsilon$-th hyperderivative, for 
$\epsilon \in \{1,\ldots$,$q-1\}$, and deduce that
\begin{equation}\label{eq:derivated_u}
  D_\epsilon(u) = u_0^q D_\epsilon(1) + 
  u_1^q D_\epsilon\left((z-\lambda)^1\right)+ \cdots + 
  u_{q-1}^q D_\epsilon\left((z-\lambda)^{q-1}\right).
\end{equation}  

It is now easy to see, using the composition properties of 
\cite{Jeong:2011bh}, that
\begin{equation}
D_\epsilon\left((z-\lambda)^k\right) = \binom{k}{\epsilon} (z-\lambda)^{k-\epsilon} 
  \label{eq:basis_element_derivative}.
\end{equation}
Observe that $D_\epsilon\left((z-\lambda)^k\right)$ vanishes if 
$k<\epsilon$.

Since $\ord\left(c(z-\lambda)^k\right)=k$, when $k\in\N$ and $c$ is a 
non-zero constant, for $k\geq\epsilon$,
Equation~\ref{eq:basis_element_derivative}, is 
equivalent to 
\begin{equation}
\ord\left(D_\epsilon\left((z-\lambda)^k\right)\right) =k-\epsilon.
\end{equation}

We are now ready to prove the following. 
\begin{theorem}\label{th:ord_inequality}
Let $u\in F(z)\backslash\{0\}$, where $F$ is a algebraically closed field 
of positive characteristic. If $\lambda\in F$, $\epsilon\in\N$ 
and $u,D_\epsilon(u)\neq 0$, then 
$\ord_\lambda\left(D_\epsilon(u)\right)\geq \ord_\lambda(u)-\epsilon$. 
\end{theorem}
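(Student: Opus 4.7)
The plan is to exploit the decomposition in Equation~(\ref{eq:original_u}) with $q=p^s$ chosen large enough that $q>\epsilon$, so that the hyperderivative interacts nicely with the $q$-th powers $u_k^q$ via property P3. Once we pass the $D_\epsilon$ through the decomposition, each surviving summand has a transparent order at $\lambda$, and the ultrametric inequality gives the bound.

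More concretely, I would first pick $s\in\N$ with $p^s>\epsilon$ and set $q=p^s$. Since $F$ is algebraically closed (hence perfect) and $z^q=(z-\lambda)^q+\lambda^q$, the set $\{1,(z-\lambda),\dots,(z-\lambda)^{q-1}\}$ is a basis of $F(z)$ over $F(z^q)=F((z-\lambda)^q)$. Hence $u$ admits a unique decomposition
\begin{equation*}
u=\sum_{k=0}^{q-1}(z-\lambda)^k u_k^q,\qquad u_k\in F(z).
\end{equation*}
By the ultrametric property of $\ord_\lambda$ on $F(z)$,
\begin{equation*}
\ord_\lambda(u)=\min_{0\le k\le q-1,\,u_k\ne 0}\bigl(k+q\cdot\ord_\lambda(u_k)\bigr),
\end{equation*}
so in particular $\ord_\lambda(u)\le k+q\cdot\ord_\lambda(u_k)=\ord_\lambda\bigl(u_k^q(z-\lambda)^k\bigr)$ for every $k$ with $u_k\ne 0$.

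Next I would apply $D_\epsilon$ term-by-term. By P1, P2 and P3, together with $D_\epsilon(u_k^q)=0$ for $1\le\epsilon\le q-1$ (Equation~(\ref{eq:derivation_constants})), we obtain
\begin{equation*}
D_\epsilon(u)=\sum_{k=\epsilon}^{q-1}\binom{k}{\epsilon}u_k^q(z-\lambda)^{k-\epsilon}.
\end{equation*}
For each index $k\ge\epsilon$ with a nonzero contribution, the corresponding summand has order at $\lambda$ equal to $q\cdot\ord_\lambda(u_k)+(k-\epsilon)=\ord_\lambda\bigl(u_k^q(z-\lambda)^k\bigr)-\epsilon\ge\ord_\lambda(u)-\epsilon$. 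The ultrametric inequality applied to the sum then yields $\ord_\lambda(D_\epsilon(u))\ge\ord_\lambda(u)-\epsilon$, as required.

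The argument is essentially mechanical; the only mild subtlety is the freedom to enlarge $q$ beyond $\epsilon$, which is what makes P3 annihilate the $u_k^q$ factors and leaves only the contribution from differentiating the basis monomials $(z-\lambda)^k$. This same freedom is what makes algebraic closure (or at least perfectness) of $F$ relevant, since it guarantees the $q$-th-power decomposition exists.
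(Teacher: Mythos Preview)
Your argument is correct and follows essentially the same route as the paper: expand $u$ in the basis $\{(z-\lambda)^k\}_{0\le k<q}$ over $F(z^q)$, use P3 to kill $D_\epsilon(u_k^q)$, and compare orders term by term. The only cosmetic difference is that you make the choice $q=p^s>\epsilon$ explicit (the paper leaves this implicit from the surrounding setup), and you invoke the ultrametric inequality directly rather than isolating the term of minimal order; note that your displayed equality $\ord_\lambda(u)=\min_k(k+q\,\ord_\lambda(u_k))$ uses not just the ultrametric inequality but also that the summands have pairwise distinct orders (distinct residues mod $q$), which is exactly what the paper spells out.
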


\begin{proof}
We write $u$ 
 as in \ref{eq:original_u}, and observe that the 
terms on the right hand have pairwise distinct orders at $\lambda$,
\begin{equation}
  \ord\left((z-\lambda)^k(u_k)^q\right) = k + q\cdot \ord(u_k),
\end{equation}
since $0\leq k <q$. 
Denote by $\mu$ the remainder of dividing $\ord(u) $ by $q$. 
It follows that 
the term in $\{(z-\lambda)^k u_k^q: 0\leq k <q\}$ with the lowest 
order is $(z-\lambda)^\mu u_\mu^q$. 
Therefore, if we rewrite the terms 
$\{D_\epsilon\left((z-\lambda)^k\right)u_k^q: 0\leq k <q\}$ on the 
right hand of \ref{eq:derivated_u} as 
$\{\binom{k}{\epsilon}(z-\lambda)^{k-\epsilon}u_k^q: 
\epsilon\leq k <q\}$ (due to \ref{eq:basis_element_derivative}) 
we again notice that all non-vanishing terms have different 
orders 
again, equal to the order of the corresponding term of $u$ minus 
$\epsilon$.

If the term $D_\epsilon\left((z-\lambda)^\mu\right)u_\mu^q$
does not vanish, it will have the smallest order among all terms, 
and in this case 
\begin{equation}
\ord\left(D_\epsilon(u)\right) = 
\ord\left(D_\epsilon\left((z-\lambda)^\mu\right)u_\mu^q\right) =
\ord\left((z-\lambda)^{\mu} u_\mu^q\right)-\epsilon = 
\ord(u)-\epsilon
\end{equation}
If, however, that term vanishes, some other term with greater 
order (as a consequence of the analysis above) will give
$\ord\left(D_\epsilon(u)\right)$. In this case, 
$\ord\left(D_\epsilon(u)\right)>\ord(u)-\epsilon$ and 
our proof is complete.
\end{proof}


\subsection{The orders of inverse images and Proof of 
Theorem~\ref{th:Manos}} 
\label{sub:the_inverse_image_is_finite}

Let $y\in F(z)$ and let $x = (x_1,x_2,\ldots,x_n)$ be a solution of 

\begin{equation}\label{eq:non_homogeneous_equation}
y = f(x) = \sum_{i=1}^n f_i(x_i) = 
\sum_{i=1}^n b_i x_i^{p^s} + 
\sum_{i=1}^n\sum_{j=0}^{s-1} c_{i,j} x_i^{p^j},
\end{equation} 
where $x_i\in F(z)$, $n\leq q=p^s$, $s\in\N$, and $b_i,c_{i,j}$ 
belong to $F[z]$. 
We assume that the polynomials
$\{b_i: i=1,\ldots,q\}$ are linearly independent 
over $\mathcal{V}_s(F)$.
We will prove the following.

\begin{proposition}
\label{th:bounded_with_y}
Let $F$ and $f(\bar{X})$ as above and let 
$\bar{X} = (x_1,x_2,\ldots,x_n)$ a solution of $f(\bar{X})=y$, 
with $x_i\in F(z)$ for all $1\leq i \leq q$. 
Let $y=\frac{y_1}{y_2}$, where $y_1,y_2 \in F[z]$.
Then, there is a constant
$C$, 
 depending only on $\deg(y_2)$ and the 
coefficients of $f$, such that, 
for any pole $\lambda\in F$ of some $x_k$, 
for all $1\leq i \leq n$,
we have $\ord_\lambda(x_i)\geq C$.
\end{proposition}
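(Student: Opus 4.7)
The plan is to exploit the hypothesis that $\{b_1,\dots,b_n\}$ is linearly independent over $\mathcal{V}_s(F)$ via Corollary~\ref{cor:basis}, which yields integers $0=\epsilon_1<\epsilon_2<\cdots<\epsilon_n<p^s$ with $W:=\det\bigl(D_{\epsilon_i}(b_j)\bigr)\neq 0$. Note that $W\in F[z]\setminus\{0\}$ and depends only on $f$, so $\deg(W)$ is a constant of $f$. Applying each $D_{\epsilon_i}$ to the equation $f(x)=y$, we use the Leibniz rule (P2) together with P3: for $0<\epsilon_i<p^s$, we have $D_b(x_k^{p^s})=0$ for all $0<b\le \epsilon_i$, so the leading monomials transform as $D_{\epsilon_i}(b_k x_k^{p^s})=D_{\epsilon_i}(b_k)\,x_k^{p^s}$, while the strictly lower-degree monomials $c_{k,j}x_k^{p^{s-j}}$ ($j\ge 1$) produce, via Leibniz and P3, sums of terms of the form $D_a(c_{k,j})\bigl(D_{b'}(x_k)\bigr)^{p^{s-j}}$ with $b'< p^j$. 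Collecting these into an additive polynomial $L_i(x_1,\dots,x_n)$ of degree $\le p^{s-1}$ with coefficients in $F[z]$, I obtain the linear system
\[
\sum_{k=1}^{n} D_{\epsilon_i}(b_k)\, x_k^{p^s} \;=\; D_{\epsilon_i}(y)-L_i(x_1,\dots,x_n),\qquad i=1,\dots,n.
\]
Cramer's rule gives $W\cdot x_j^{p^s}=\det M_j$, where $M_j$ is the matrix $\bigl(D_{\epsilon_i}(b_k)\bigr)$ with its $j$-th column replaced by the vector with entries $D_{\epsilon_i}(y)-L_i$.

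To bound orders, I rely on Theorem~\ref{th:ord_inequality}. Since $b_k\in F[z]$, every entry $D_{\epsilon_i}(b_k)$ is a polynomial, hence has nonnegative order at any $\lambda\in F$; similarly $D_a(c_{k,j})\in F[z]$. Writing $y=y_1/y_2$ with $y_1,y_2\in F[z]$ gives $\ord_\lambda(y)\ge -\deg(y_2)$, and Theorem~\ref{th:ord_inequality} yields $\ord_\lambda(D_{\epsilon_i}(y))\ge -\deg(y_2)-p^s$. Setting $m:=\min_k \ord_\lambda(x_k)$ (assumed negative, otherwise we are done), Theorem~\ref{th:ord_inequality} together with P3 also bounds each term of $L_i$:
\[
\ord_\lambda\bigl(D_a(c_{k,j})(D_{b'}(x_k))^{p^{s-j}}\bigr)\;\ge\; p^{s-j}\bigl(m-p^s\bigr)\;\ge\; p^{s-1}m-K_f,
\]
where $K_f$ depends only on the degrees of the $c_{k,j}$ and on $s$. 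Therefore $\ord_\lambda(D_{\epsilon_i}(y)-L_i)\ge \min\{-\deg(y_2)-p^s,\,p^{s-1}m-K_f\}$.

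Choosing $j$ with $\ord_\lambda(x_j)=m$ and using that off-column entries of $M_j$ are polynomials with nonnegative order at $\lambda$, I conclude
\[
p^s m \;=\; \ord_\lambda(\det M_j)-\ord_\lambda(W)\;\ge\; \min\bigl\{-\deg(y_2)-p^s,\; p^{s-1}m-K_f\bigr\}-\deg(W).
\]
In the first case this gives $m\ge (-\deg(y_2)-p^s-\deg(W))/p^s$; in the second $(p^s-p^{s-1})m\ge -K_f-\deg(W)$, yielding another absolute lower bound. Taking the minimum of the two produces the required constant $C$, depending only on $\deg(y_2)$ and the coefficients of $f$.

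The main obstacle is the careful handling of $L_i$: one must verify that every contribution to the right-hand side, other than $D_{\epsilon_i}(y)$ itself, carries an exponent on $x_k$ strictly less than $p^s$, so that in the regime $m\to -\infty$ the term $p^s m$ on the left genuinely dominates the term $p^{s-1}m$ bounding $L_i$. A secondary subtlety is that Theorem~\ref{th:ord_inequality} is stated for nonzero hyperderivatives, so one must discard identically vanishing summands (those with $\epsilon\equiv 0 \pmod{p^s}$ in the leading block, and similarly with $p^{s-j}$ for the lower blocks) before applying the order inequality; fortunately P3 dictates exactly which vanish, so this bookkeeping is mechanical.
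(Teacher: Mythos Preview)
Your proof is correct and follows essentially the same route as the paper: apply the hyperderivatives $D_{\epsilon_i}$ supplied by Corollary~\ref{cor:basis}, use P2 and P3 so that the top-degree part becomes the linear system $\sum_k D_{\epsilon_i}(b_k)x_k^{p^s}=D_{\epsilon_i}(u)$, solve by Cramer's rule with the nonzero Wronskian $W$, and then compare the order $p^s m$ on the left against the bound $p^{s-1}m-\text{const}$ coming from the lower-degree terms and the bound $-\deg(y_2)-\text{const}$ coming from $D_{\epsilon_i}(y)$, via Theorem~\ref{th:ord_inequality}. If anything, your bookkeeping of the Leibniz terms $D_a(c_{k,j})\bigl(D_{b'}(x_k)\bigr)^{p^{s-j}}$ is a bit more explicit than the paper's, which absorbs them into the single combined lower bound $p^{s-1}\ord_\lambda(x_J)+1-q$; the only cosmetic point is that your $L_i$ is not literally an additive polynomial in the $x_k$ (it involves their hyperderivatives), but this does not affect the order estimate you actually use.
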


\begin{proof}
Consider \ref{eq:non_homogeneous_equation} and 
move the terms of lower degree on the right hand side:
\begin{equation} \label{eq:homogeneous_separated_with_y}
b_1x_1^q + \cdots + b_nx_n^q  = 
u := -\sum_{i=1}^n \sum_{j=1}^s c_{i,j}x_i^{p^{s-j}}+y.
\end{equation}
By Corollary~\ref{cor:basis}, 
since $\{b_1,\ldots,b_n\}$ are linearly independent over 
$\mathcal{V}_s(F)$, 
there exist integers 
$0=\epsilon_1 <\epsilon_2 <\cdots <\epsilon_n < p^s$ with 
$\det\left( D_{\epsilon_i}\left(b_j\right) \right) \neq 0$. 

For every $\epsilon\in\{\epsilon_i: 1\leq i \leq n\}$, 
we apply the hyperderivative 
$D_{\epsilon}$ on both sides of \ref{eq:homogeneous_separated_with_y}.
We get  
\begin{equation}\label{eq:one_derivation_with_y}
\sum_{i=1}^n x_i^q D_{\epsilon}(b_i) = D_{\epsilon}(u) = 
\sum_{i=1}^n\sum_{j=1}^{s-1}\gamma_{i,j}x_i^{p^{s-j}} + 
\sum_{i=1}^n \gamma_{i}D_{\epsilon}(x_i)+D_{\epsilon}(y),
\end{equation}
where $\gamma_{i,j},\gamma_i$ all polynomials of 
$c_{i,j}$ and their hyperderivatives.  

Consider the system obtained by writing 
Equation~\ref{eq:one_derivation_with_y},
for $\epsilon=\epsilon_1,\epsilon_2,\dots,\epsilon_n$, 
\begin{align}
\sum_{i=1}^n x_i^q D_{\epsilon_1}(b_i) &= D_{\epsilon_1}(u)\notag\\
&\vdots \\
\sum_{i=1}^n x_i^q D_{\epsilon_n}(b_i) &= D_{\epsilon_n}(u)\notag
\end{align}
If we consider the variables $x_i^q$ on the left hand side 
of the system as the unknowns then the determinant of the system 
is $W:=\det\left( D_{\epsilon_i}(b_j) \right) \neq 0$.

Let $J$ be an index such that 
$\ord(x_J)\leq \ord(x_i)$ for every $i\in\{1,\dots,n\}$. 
Notice that $\ord(x_J)<0$ since we assumed that $\lambda$ is a 
pole of some $x_k$.
Applying Crammer's rule for the $J$-th index, we 
get that $Wx_J^q=\Lambda$, 
where
\begin{align}
\Lambda &:= 
\begin{vmatrix} 
b_1 & \cdots & b_{J-1} & u & b_{J+1} & \cdots & b_n \\
D_{\epsilon_2}(b_1) & \cdots &  D_{\epsilon_2}(b_{J-1}) & D_{\epsilon_2}(u) & D_{\epsilon_2}(b_{J+1}) & \cdots & D_{\epsilon_2}(b_n) \\
D_{\epsilon_3}(b_1) & \cdots &  D_{\epsilon_3}(b_{J-1}) & D_{\epsilon_3}(u) & D_{\epsilon_3}(b_{J+1}) & \cdots & D_{\epsilon_3}(b_n) \\
\vdots &  &  \vdots & \vdots & \vdots &  & \vdots \\
D_{\epsilon_n}(b_1) & \cdots &  D_{\epsilon_n}(b_{J-1}) & D_{\epsilon_n}(u) & D_{\epsilon_n}(b_{J+1}) & \cdots & D_{\epsilon_n}(b_n)
\end{vmatrix} \nonumber\\
& =  \sum_{i=1}^n\sum_{j=1}^{s-1}\delta_{i,j}x_i^{p^{s-j}} + 
\sum_{i=1}^n\sum_{k=1}^n \delta_{i}D_{\epsilon_k}(x_i)
+\sum_{k=1}^n \zeta_{k}D_{\epsilon_k}(y), %
\label{eq:FJ_right_hand_with_y}
\end{align}
where $\delta_{i,j},\delta_{i}$ and $\zeta_{k}$ are polynomials that
can be determined from $f$. 

We are now interested in determining  
bounds for  $\ord$ of each term on
the right hand of \ref{eq:FJ_right_hand_with_y}.

Regarding the terms of the form $\delta_{i,j}x_i^{p^{s-j}}$,
where $1\leq i \leq n$ and $1\leq j \leq s-1$, it holds 
that 
\begin{equation}
\ord\left(\delta_{i,j}x_i^{p^{s-j}}\right) 
\geq p^{s-1}\ord(x_J) + \Delta, 
\end{equation}
where $\Delta:= \min_{i,j}\{\ord(\delta_{i,j})\}$, since
$\ord(x_i)\geq \ord(x_J)$ and $\ord(x_J)<0$.

Regarding the terms of the form $\delta_{i}D_{\epsilon_k}(x_i)$,
where $1\leq i \leq n$ and $1\leq k \leq n$, it holds that
\begin{equation}
\ord\left(\delta_{i}D_{\epsilon_k}(x_i)\right)\geq 
p^{s-1}\ord(x_J) + E - (q-1),
\end{equation}
where $E:= \min_{i}\{\ord(\delta_{i})\}$. To 
prove this, we have used Theorem~\ref{th:ord_inequality} 
along 
with the relations $\epsilon_{k}\leq q-1$, $\ord(x_i)\geq \ord(x_J)$ and $\ord(x_J)<0$.

Since $\Delta,E\geq 0$, all terms
$\delta_{i,j}(x_i)^{p^{s-j}}$ and $\delta_{i}D_{\epsilon_k}(x_i)$
have $\ord_\lambda$ greater than or equal to 
$p^{s-1}\ord_\lambda(x_J) +1-q$. 
Let $\Phi_2$ be the order at $\lambda$ of the term $\sum_{k=1}^n \zeta_{i}D_{\epsilon_k}(y)$ 
appearing  in \ref{eq:FJ_right_hand_with_y}. 
Then, clearly, it holds that $\Phi_2\geq -\deg(y_2)-(q-1)$, due to  Theorem~\ref{th:ord_inequality}. 
Also observe that, from the relation $Wx_J^q=\Lambda$, it follows that
$\ord_\lambda(W)+q\ord_\lambda(x_J)=\ord_\lambda(\Lambda)$. 

We now compare $\Phi_2$ and $p^{s-1}\ord_\lambda(x_J) +1-q$ and 
distinguish two cases.

\begin{itemize}
\item If $p^{s-1}\ord_\lambda(x_J) +1-q< \Phi_2$, then
$\ord_\lambda(\Lambda)\geq p^{s-1}\ord_\lambda(x_J) + 1-q$.
In this case, we conclude that 
\begin{equation}\label{eq:33}
\ord_\lambda(x_J)\geq \frac{1-q-\ord_\lambda(W)}{q-p^{s-1}}\geq \frac{1-q-\deg_z(W)}{q-p^{s-1}}.
\end{equation}
\item  If  $p^{s-1}\ord_\lambda(x_J) +1-q \geq \Phi_2$, then
$\ord_\lambda(\Lambda)\geq \Phi_2\geq -\deg(y_2)-(q-1)$. 
It follows that 
\begin{equation}\label{eq:34}
\ord_\lambda(x_J)\geq \frac{1-q-\deg_z(W)-\deg(y_2)}{q}.
\end{equation}
\end{itemize}

Take $C$ to be the minimum of the bounds of $\ord_\lambda(x_J)$
in \ref{eq:33} and \ref{eq:34}. Note that $\ord_\lambda(x_i)\geq \ord_\lambda(x_J)\geq C$ for 
every $1\leq i \leq n$ and the proof is complete.
\end{proof}

\begin{proof}[Proof of Theorem~\ref{th:Manos}]
It suffices to establish the conclusion of 
Proposition~\ref{th:bounded_with_y}
not only for affine poles but also for the 
{\it pole at infinity}. 
Here is how: 
Let $\eta\in F$ such that none of the coefficients of $f$
has a zero at $\eta$.  
Apply the automorphism 
$z\mapsto 1/(z-\eta)$ and observe that coefficients 
$b_i$ map to 
$\tilde{b}_i/(z-\eta)^{\deg(b_i)}$, where $\tilde{b}_i\in F[z]$.
Let $M:=\max_i\{\deg(b_i)\}$. 
Clear the denominators by multiplying with $(z-\eta)^M$ and 
apply Proposition~\ref{th:bounded_with_y} for $\lambda=\eta$.
\end{proof}


\section{The Proof of Theorem \ref{main}}
\label{sub:proof_of_theorem}

We will now prove a series of propositions that will 
be used to prove Theorem~\ref{main}.

\begin{proposition}\label{logic1}
Let $f$ and $h$ be additive polynomials of degree $p^s$, in
$m$ and $n$ variables respectively,
where $m+n=p^s$. 
Let $H$ and $G$  be additive polynomials  in only
$F$-variables.
Assume that, for some $N\in \N$ and for some $e\in \F_p[z]$, it holds that
\begin{equation}
R=Im(f)+Im(h)+ Im_F(\frac{1}{e^N}G).  
\end{equation}

Then, the formula 
 $u\in Im(f)+Im_{F}(H)$ is equivalent, in $\mathcal R$, to the formula $\phi_1(u)$, defined as
\begin{equation}\label{2}
 \forall x=(x_1,\dots ,x_m)\ \forall y \forall \gamma\  
[(\gamma\in F\wedge u=f(x)+ h(y)+\frac{1}{e^N} G(\gamma))\rightarrow \pi_1 (y,\gamma )],
\end{equation}
where $\pi _1(y, \gamma )$ is the formula
\begin{equation}
\pi_1 (y,\gamma )\ :\  
\exists w \exists \alpha 
[\alpha\in F \wedge f(w)+h(y)+\frac{1}{e^N}\hat G(\gamma )=H(\alpha)]\ .
\end{equation}
In the writing of the formulas above  we mean that the variables of the tuples $x,y,w,\alpha,\beta$ and $\gamma$ are
pairwise distinct. 
\end{proposition}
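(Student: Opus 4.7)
The plan is to verify both directions of the claimed equivalence, using two ingredients: the additive character of $f,h,H,G$ (so $f(x)-f(x')=f(x-x')$, etc.) and the decomposition hypothesis $R=Im(f)+Im(h)+Im_F(\tfrac{1}{e^N}G)$. I will assume, as the writing of the formula $\pi_1$ evidently intends, that $\hat G$ denotes the same polynomial as $G$ (otherwise the two sides of the equivalence are not even of the right form). Throughout, I work inside $\mathcal R$ with the fixed universe $R$.

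\textbf{Forward direction} ($u\in Im(f)+Im_F(H)\Rightarrow \phi_1(u)$). Suppose $u=f(x_0)+H(\alpha_0)$ for some $x_0\in R^m$ and some tuple $\alpha_0\in F$. To verify the universal statement $\phi_1(u)$, I take arbitrary $x\in R^m$, $y\in R^n$ and $\gamma\in F$ such that $u=f(x)+h(y)+\tfrac{1}{e^N}G(\gamma)$, and I must produce witnesses for $\pi_1(y,\gamma)$. The natural choice is $w:=x-x_0\in R^m$ and $\alpha:=\alpha_0$. Then by additivity of $f$ and the two expressions for $u$,
\begin{equation*}
f(w)+h(y)+\tfrac{1}{e^N}G(\gamma)=f(x)-f(x_0)+h(y)+\tfrac{1}{e^N}G(\gamma)=u-f(x_0)=H(\alpha_0)=H(\alpha),
\end{equation*}
which is exactly the body of $\pi_1(y,\gamma)$.

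\textbf{Reverse direction} ($\phi_1(u)\Rightarrow u\in Im(f)+Im_F(H)$). Here I invoke the decomposition hypothesis to obtain $x_0\in R^m$, $y_0\in R^n$ and a tuple $\gamma_0$ from $F$ such that $u=f(x_0)+h(y_0)+\tfrac{1}{e^N}G(\gamma_0)$. Instantiating $\phi_1(u)$ at $(x_0,y_0,\gamma_0)$ activates the premise and hence gives $\pi_1(y_0,\gamma_0)$: there exist $w\in R^m$ and an $F$-tuple $\alpha$ with
\begin{equation*}
f(w)+h(y_0)+\tfrac{1}{e^N}G(\gamma_0)=H(\alpha).
\end{equation*}
Solving for $h(y_0)+\tfrac{1}{e^N}G(\gamma_0)$ and substituting back into the expression for $u$, using additivity of $f$ once more, yields
\begin{equation*}
u=f(x_0)+\bigl(H(\alpha)-f(w)\bigr)=f(x_0-w)+H(\alpha)\in Im(f)+Im_F(H).
\end{equation*}

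\textbf{Main subtlety.} The argument is entirely a bookkeeping exercise once the right setup is in place; there is no genuinely hard step. The only subtle point is that the existential witness $(x_0,y_0,\gamma_0)$ in the reverse direction is guaranteed to exist, for every $u\in R$, precisely by the standing hypothesis $R=Im(f)+Im(h)+Im_F(\tfrac{1}{e^N}G)$, which was established by Lemma~\ref{reduct} (applied after adjoining auxiliary variables via Lemma~\ref{basic} to pass from the strongly normalized $f$ to a $p$-basic companion $f+h$ of the same degree $p^s=m+n$). This is what turns the naively universal formula $\phi_1(u)$ into a genuine equivalent of the existential formula $u\in Im(f)+Im_F(H)$, and it is the algebraic reason the whole reduction scheme of Section~\ref{sub:proof_of_theorem} can convert an existential formula into a universal one.
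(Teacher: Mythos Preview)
Your proof is correct and follows essentially the same route as the paper's: in both directions you set the witness $w$ to be the difference of the two $f$-preimages and use additivity, with the decomposition hypothesis $R=Im(f)+Im(h)+Im_F(\tfrac{1}{e^N}G)$ supplying the triple $(x_0,y_0,\gamma_0)$ needed to activate $\phi_1$ in the reverse direction. Your identification of $\hat G$ with $G$ is also the intended reading.
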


Note that essentially $e = s_1s_2\cdots s_\nu$. 
The proof is similar to the one of Claim 4.2 (iii) of 
\cite{PheidasZahidi2004}.
For completeness, we include a detailed proof  
in Section~\ref{sub:proof_of_proposition_logic1}.


We will be referring to an existential formula $\phi (u,\{ v_j\}_{j\in J})$, 
as in ~\ref{existential}.

\begin{proposition}\label{logic2}
With notation as above, the formula  $\phi (u,\{ v_j\}_{j\in J})$ is equi-valent, in $\mathcal R$, to the formula $\phi_2$, given by
\begin{gather}\label{1}
\phi_2\ :\ u\in Im(f)+Im_{F}(H)\wedge  \\
\forall w\ \forall \beta\ [(\beta\in F\wedge  f(w)+H(\beta )=u )\rightarrow  \pi_2 (\{ v_j\}_{j\in J}, w, \beta )]\notag,
\end{gather}

where 
\begin{gather}
\pi_2(\{ v_j\}_{j\in J}, w, \beta )\ :\  \exists t\exists \gamma\ [\gamma\in F\wedge f(t)+H(\gamma )=0 \ \wedge  \\
\{\wedge_{j \in J} \ e_j(t) + G_j(\gamma) \neq v_j -e_j(w)-G_j(\beta ) \} \wedge  P_{\sigma}(\beta +\gamma)]\notag.
\end{gather}
Here, $P_\sigma (\beta +\gamma )$ denotes the result of substitution in $P_\sigma (\alpha)$ of the tuple of variables 
$\alpha$ by the array of variables $\beta +\gamma$ where $+$ implies component-wise addition. The variables of the tuples $\alpha$,  $\beta$ and $\gamma$ are
pairwise distinct.
\end{proposition}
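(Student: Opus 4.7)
The plan is to establish the equivalence $\phi \leftrightarrow \phi_2$ by a simple translation of witnesses, exploiting the additivity of $f$, $H$, the $e_j$ and the $G_j$. The key observation is that any two representations of $u$ in the form $f(\cdot)+H(\cdot)$ differ by a pair $(t,\gamma)$ lying in the kernel-like relation $f(t)+H(\gamma)=0$; this is what allows us to convert the existential in $\phi$ into a universal-then-existential statement via the substitution $t=x-w$, $\gamma=\alpha-\beta$ (in one direction) and $x=w+t$, $\alpha=\beta+\gamma$ (in the other).

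For the direction $\phi \Rightarrow \phi_2$, I would fix witnesses $x,\alpha$ satisfying $\alpha\in F$, $f(x)+H(\alpha)=u$, $e_j(x)+G_j(\alpha)\neq v_j$ for every $j\in J$, and $P_\sigma(\alpha)$. The conjunct $u\in Im(f)+Im_F(H)$ of $\phi_2$ is witnessed by $(x,\alpha)$ itself. For the universal part, pick an arbitrary pair $w,\beta$ with $\beta\in F$ and $f(w)+H(\beta)=u$; set $t:=x-w$ and $\gamma:=\alpha-\beta$. Since $f$ is additive, $f(t)+H(\gamma)=(f(x)+H(\alpha))-(f(w)+H(\beta))=u-u=0$; clearly $\gamma\in F$. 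Additivity of $e_j$ and $G_j$ yields
\[
e_j(t)+G_j(\gamma)=\bigl(e_j(x)+G_j(\alpha)\bigr)-\bigl(e_j(w)+G_j(\beta)\bigr),
\]
which differs from $v_j-e_j(w)-G_j(\beta)$ precisely because $e_j(x)+G_j(\alpha)\neq v_j$. Finally $P_\sigma(\beta+\gamma)=P_\sigma(\alpha)$ holds by hypothesis, so the inner existential $\pi_2$ is realized by $(t,\gamma)$.

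For the converse $\phi_2 \Rightarrow \phi$, I would use the first conjunct of $\phi_2$ to pick $w,\beta$ with $\beta\in F$ and $f(w)+H(\beta)=u$, then apply the universal part of $\phi_2$ to this specific pair in order to obtain $t,\gamma$ (with $\gamma\in F$) satisfying the inner clauses of $\pi_2$. Set $x:=w+t$ and $\alpha:=\beta+\gamma$; then $\alpha\in F$, and by additivity
\[
f(x)+H(\alpha)=\bigl(f(w)+H(\beta)\bigr)+\bigl(f(t)+H(\gamma)\bigr)=u+0=u.
\]
For each $j\in J$, again by additivity, $e_j(x)+G_j(\alpha)=\bigl(e_j(w)+G_j(\beta)\bigr)+\bigl(e_j(t)+G_j(\gamma)\bigr)$, and the strict inequality $e_j(t)+G_j(\gamma)\neq v_j-e_j(w)-G_j(\beta)$ supplied by $\pi_2$ is exactly what is needed to conclude $e_j(x)+G_j(\alpha)\neq v_j$. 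Since $P_\sigma(\alpha)=P_\sigma(\beta+\gamma)$ also holds, the witnesses $(x,\alpha)$ verify $\phi$.

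No serious obstacle is expected; the content is entirely the ``difference trick'' enabled by additivity, and the only bookkeeping to watch is the disjointness of the tuples $x,\alpha,w,\beta,t,\gamma$ (which is guaranteed by the conventions stated with the proposition) and the fact that $\pi_2$ is correctly instantiated at the particular pair $(w,\beta)$ produced from the first conjunct of $\phi_2$, rather than at some unrelated pair. Once these points are observed, both implications reduce to direct verification of the four conjuncts in $\psi$.
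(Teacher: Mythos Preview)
Your proposal is correct and follows essentially the same argument as the paper: both directions use the additivity of $f$, $H$, $e_j$, $G_j$ together with the substitutions $t=x-w$, $\gamma=\alpha-\beta$ (for $\phi\Rightarrow\phi_2$) and $x=w+t$, $\alpha=\beta+\gamma$ (for $\phi_2\Rightarrow\phi$), exactly as in the paper's proof.
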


The proof is similar to the one of Claim 4.1 of \cite{PheidasZahidi2004}. 
For  completeness, we include  a detailed  proof in 
Section~\ref{sub:proof_of_proposition_logic2}. 
Note that Proposition~\ref{logic2} holds even if the ring $R$ is replaced by any subring of $F(z)$ that contains $F[z]$.

\begin{proposition}\label{universal}
A bounded existential $\mathcal{L}_p(z)^e$-formula is equivalent in $\mathcal R$ to a universal $\mathcal{L}_p(z)^e$-formula. 
\end{proposition}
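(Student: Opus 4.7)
The plan is to take a bounded existential formula $\phi(\vec y) : \exists \vec \alpha [\vec \alpha \in F \wedge \psi(\vec \alpha, \vec y)]$, with $\psi$ quantifier-free and $\vec y$ the free variables ranging over $R$, and produce an equivalent universal $\mathcal{L}_p(z)^e$-formula. First I would put $\psi$ into disjunctive normal form; since a disjunction of universal formulas with pairwise-disjoint quantified variables is itself universal (after renaming), it suffices to treat each conjunction of literals and disjoin the resulting universals at the end.

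Next, using that every $\mathcal{L}_p(z)$-term is an $\F_p$-additive polynomial in its variables plus a constant in $\F_p[z]$, I would rewrite each atom of $\psi$ by separating its $\vec\alpha$- and $\vec y$-contributions: an equality becomes $A(\vec\alpha) = B(\vec y)$ (up to a fixed constant), a membership atom $t \in F$ becomes $A(\vec\alpha) + B(\vec y) \in F$, and a $P_\tau$-atom becomes $P_\tau(A_1(\vec\alpha) + B_1(\vec y), \ldots)$, where each $A_i$ is additive in $\vec\alpha$ and each $B_i$ is additive in $\vec y$, both over $\F_p[z]$. Because $\vec\alpha \in F$, every $A_i(\vec\alpha)$ lies in $F[z]$ with $z$-degree bounded by a specific integer $d$ depending only on the $A_i$'s, so each such atom forces the corresponding $B_i(\vec y)$ to lie in the finite-dimensional $F$-subspace $F[z]_{\le d}$ of $R$.

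I would then fix a polynomial $c \in \F_p[z]$ coprime to $\prod_\ell s_\ell$ (possible since $\F_p[z]$ has infinitely many irreducibles outside $S$) and an integer $N$ with $(N{+}1)\deg c > d$. By Fact~\ref{ex}(b) every $B_i(\vec y)$ admits a $c$-adic decomposition $B_i(\vec y) = \sum_{j=0}^{N}\bigl(\sum_{k<\deg c} \beta_{i,j,k}z^k\bigr)c^j + w_i c^{N+1}$ with $\beta_{i,j,k} \in F$ and $w_i \in R$, and by Fact~\ref{ex}(a) this decomposition is \emph{unique}, since no irreducible factor of $c$ is invertible in $R$. The shape condition ``$B_i(\vec y) \in F[z]_{\le d}$'' is therefore equivalent to the universal statement ``for every tuple $\vec\beta_i \in F$ and $w_i \in R$ satisfying the decomposition equation, $w_i = 0$ and the highest-degree $\beta_{i,N,k}$'s vanish.'' Substituting the $\vec\beta_i$-expression for each $B_i(\vec y)$ into the coefficient-matching, inequality, and $P_\tau$ atoms reduces the rest of $\psi$ to a Boolean combination of purely $F$-first-order assertions in $\vec\alpha$ and $\vec\beta$, and the leftover $\exists \vec\alpha \in F$ absorbs into a single atom $P_\sigma(\vec\beta)$, where $\sigma$ is the $\mathcal{L}_p$-formula asserting the existence of a realising $\vec\alpha$ in $F$.

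Assembling these steps produces
\begin{equation*}
\forall \vec\beta\, \forall \vec w \bigl[(\vec\beta \in F \wedge \text{decomposition equations}) \rightarrow \bigl(\textstyle\bigwedge_i w_i = 0 \wedge P_\sigma(\vec\beta)\bigr)\bigr],
\end{equation*}
a universal formula of $\mathcal{L}_p(z)^e$. The main obstacle is the preceding paragraph: translating the shape condition ``$B_i(\vec y) \in F[z]_{\le d}$'' into a single universal formula relies crucially on the uniqueness of the $c$-adic decomposition, which in turn forces the choice of $c$ coprime to $\prod_\ell s_\ell$. This delicate new feature is absent from the polynomial-ring analogue of \cite{PheidasZahidi2004}, where no elements of $\F_p[z]$ are inverted.
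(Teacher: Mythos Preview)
Your approach is essentially the same as the paper's: decompose the $R$-valued terms $c$-adically with $c$ coprime to the inverted irreducibles, use the uniqueness from Fact~\ref{ex} to turn the decomposition into a universal hypothesis, and push the remaining conditions down to $F$ via a $P_\sigma$ predicate. The identification of the coprimeness requirement is exactly right.

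There is, however, a real gap in your treatment of inequality literals. Your sentence ``each such atom forces the corresponding $B_i(\vec y)$ to lie in $F[z]_{\le d}$'' is true for equalities, membership atoms, and $P_\tau$-atoms, but it is \emph{false} for an inequality $A(\vec\alpha)\neq B_i(\vec y)$: the inequality can hold precisely because $B_i(\vec y)$ has large height. Consequently, after substituting the decomposition into an inequality you obtain $w_i\neq 0\ \vee\ (\text{an $F$-level inequality})$, which still involves the $R$-variable $w_i$; it is \emph{not} a ``purely $F$-first-order assertion in $\vec\alpha$ and $\vec\beta$'' as you claim, and therefore cannot be absorbed into a single $P_\sigma(\vec\beta)$. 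Your displayed final formula, which forces $\bigwedge_i w_i=0$, is too strong: take the bounded existential formula $\exists\alpha\in F\,[\alpha\neq v]$, which is identically true in $\mathcal R$, whereas your recipe produces a formula equivalent to $v\in F$.

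The fix is exactly what the paper does: case-split over the subsets $K$ of the index set of inequality literals, asserting $w_j\neq 0$ for $j\in K$ (which makes those inequalities hold automatically) and $w_j=0$ for $j\notin K$ (after which the remaining inequalities become genuine $F$-level comparisons that can be folded into a family of predicates $P_{\sigma_K}$). The consequent of your universal implication should therefore be a finite disjunction over $K$ rather than a single conjunction; once this disjunction is inserted your argument goes through and matches the paper's.
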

\begin{proof} 
We will prove the proposition for any formula of the form
\begin{equation}
\pi (\{v_j\}_{j\in J}, \beta )\ :\ \exists \gamma\ [\gamma\in F\wedge  \{
\wedge_{j \in J} \  G_j(\beta ,\gamma) \neq v_j \} \wedge  P_{\sigma}(\beta ,\gamma)],  
\end{equation}
where $v_j$ are terms of $\mathcal L_p(z)$  and the $G_j$ are additive polynomials in the variables of the tuples of $F$-variables $\beta $ and $\gamma$. 
Let $M$ be the maximum of the degrees of the $G_j$ with respect 
to the variable $z$. 
Pick an element $Q$ of $\F_p[z]$, 
which is not divisible by any polynomial that is
irreducible in $F[z]$ and 
invertible in $R$. 
 For each of the terms $v_j$, we construct the term 
\begin{equation}
t_j := \sum_{i=0}^M \mu _{i,j}Q^i,
\end{equation}
where each $\mu_{i,j}$ is a term of the form 
$\mu_{i,j}=\sum_{k=0}^d \mu _{i,j,k}z^k$ with 
$d:=\deg(Q)$, and $\mu_{i,j,k}$ are new $F$-variables.  
For each $j\in J$, let $y_j$ be a new variable (ranging over $R$). 

Observe that, by Fact~\ref{ex}, for each $j\in J$ and any value of the term $v_j$ over $R$, 
there are values of the variables of $\mu_{i,j}$ over $F$ so that $v_j=t_j+Q^{N+1}y_j$ holds true. And, for those values and any value of the variables of the tuples $\beta $ and $\gamma$, the sub-formula $G_j(\beta ,\gamma) \neq v_j$ is equivalent to $y_j\ne 0\vee G_j\ne t_j$. Hence $\pi$ is equivalent to  the formula 
\begin{align}
\chi (\{v_j\}_{j\in J}, \beta )\ :\ \forall \mu \forall y\ (\beta\in F\wedge \mu\in F\wedge \{\wedge_{j\in J} \ v_j=t_j+Q^{N+1}y_j \})\rightarrow \\ (\vee _{K\subset J} ( \{\wedge_{j \in K}\ y_j\ne 0\} \wedge \{\wedge _{j \in J\setminus  K}\ y_j = 0 \} \wedge  \pi _{K} [v_j/t_j])),   \notag
\end{align}
 
where 
\begin{itemize}
\item $\mu$ stands for the tuple of all variables $\mu_{i,j,k}$, 
$y$ stands for the tuple of variables $y_j$,
\item the index $K$ of $\vee_K$ ranges over all subsets of the set $J$ of indices in $\pi$ and,
\item for each subset $K$ of $J$, the formula $\pi_K  [ v_j/t_j]$ 
is the formula that results from $\pi $ by deleting the 
inequalities $G_j(\beta ,\gamma) \neq v_j$ for which  $j\in K$ 
and replacing  each  term $v_j$, for which $j\in J\setminus K$, 
by $t_j$. 
\end{itemize}

We prove the equivalence of $\pi$ and $\chi$: 

Assume that $\pi (\{v_j\}_{j\in J}, \beta )$ is true for some set of values $\tilde v_j$ of the terms $v_j$ and $\tilde \beta $ of the $F$-variables $\beta$. Write each $\tilde v_j$ as 
$\tilde v_j=\tilde t_j+Q^{N+1}\tilde y_j$, using Fact~\ref{ex}. 
Let $K$ be the set of indices $j$ for which $\tilde y_j\ne 0$. Then, for each $j\in J\setminus K$, we have $\tilde y_j =0$. 
Then, since $\pi (\{\tilde v_j\}_j, \tilde \beta )$ is true, 
it holds that there are $\tilde \gamma$ over $F$ so that the 
inequalities $G_j(\tilde \beta ,\tilde \gamma) \ne \tilde v_j$ 
hold for $j\in J\setminus K$, hence, for those $j$, we have $\tilde{y}_j=0$ 
and  $G_j(\tilde \beta ,\tilde \gamma) \neq \tilde t_j$ holds true 
and $P_{\sigma}(\beta ,\gamma)$ is true; Hence
$\pi _{K} [ v_j/t_j]$ is true. 
So $\chi (\{\tilde v_j\}_j, \tilde \beta )$ is true. We leave the converse to the reader. 
Now observe that each sub-formula  $\pi _{K} [ v_j/t_j]$ of $\pi$ is equivalent in $\mathcal R$ to a formula of the form $P_\tau$, for 
some formula $\tau$ of $\mathcal{L}_p$. 
The same holds true for equations of the form $H(\beta )=0$ that may appear in the quantifier free part of $\pi$. So the result holds in the desired generality.
\end{proof}


\begin{proof}[of Theorem~\ref{main}]

By Lemma~\ref{lem:normalized-R}, given an additive polynomial $f$
as in Proposition~\ref{logic1} or \ref{logic2}, there is a 
proper transformation $\xi ( Y, \delta )$ such that, setting 
$\tilde f (y)+G(\delta) =f(\xi (Y, \delta ))$, 
the additive polynomial $\tilde{f}$ is strongly normalized. 
Hence, since a proper transformation is onto, we may assume 
that the $f$ appearing in Propositions~\ref{logic1} 
or \ref{logic2} are strongly normalized.

Lemma~\ref{reduct} implies that, since the additive polynomial 
$f$  is strongly normalized, there are $h$ and $G$ so that the 
assumption of Proposition~\ref{logic1} is satisfied for 
$e\in \F_p[z]$ and $N\in\N$. 
Observe that, due to Theorem~\ref{th:Manos}, the variables of $w$ in 
$\pi_1$ can be substituted by a finite tuple of $F$-variables. 
Hence, formula $\pi_1(y, \gamma )$ is equivalent to 
a formula of the form $P_\tau$, for 
some formula $\tau$ of $\mathcal{L}_p$.  
It follows that $\phi_1(u)$ is equivalent to a universal 
$\mathcal L_p(z)^e$-formula.    

In Proposition \ref{logic2} observe that, from Theorem~\ref{th:Manos} 
and the equation
$f(t)+H(\gamma )=0$ in $\pi_2$, the variable $t$ may be 
substituted by a finite tuple of $F$-variables. Hence the 
formula $\pi_2(\{ v_j\}_{j\in J}, w, \beta )$ is equivalent 
in $\mathcal R$ to a bounded existential formula.

So far we have proved that any existential $\mathcal{L}_p(z)^e$-formula is equivalent in $\mathcal R$ to a disjunction of bounded existential formulas, in which equations do not occur. 
From Proposition~\ref{universal} it follows that formulas $\pi_1$ of Proposition \ref{logic1} and $\pi_2$ of Proposition \ref{logic2} are equivalent to universal $\mathcal{L}_p(z)^e$-formulas, and actually to ones with no more free variables than those present in $\pi_1$ and $\pi_2$ respectively.  

By induction on the number of alterations of quantifiers of an arbitrary $\mathcal{L}_p(z)^e$-formula in prenex form Theorem \ref{main} is proved.

For Item 2 of Theorem \ref{main}, it follows from Item 1 that any $\mathcal{L}_p(z)^e$-sentence is equivalent to an existential formula like $\phi$ in \ref{existential}, which is, in addition, a sentence. 
This means that, in this case,  the terms $u$ and $v_j$ are concrete elements of $\F_p[z]$. By Lemma   \ref{lem:normalized-R} we may assume, without loss of generality, that the additive polynomial $f$ is strongly normalized. Re-enumerate the variables of $x$ so that 
$x=(x_1,\dots ,x_k,x_{k+1},\dots ,x_m)$ and $x_{k+1},\dots,x_m$ are exactly the variables of $x$ which occur in $f$ with non-zero highest degree coefficient. 
Then, by Theorem~\ref{th:Manos},  for any value $\tilde x$ of 
the tuple $x$ which is a  solution of the equation  $f+H=u$, 
the heights of $\tilde{x}_{k+1},\dots ,\tilde{x}_m$ are effectively bounded, hence, the variables $x_{k+1},\dots ,x_m$ may be substituted by
 (existentially quantified) $F$-variables. 
 Therefore, we may assume that the sentence $\phi$ has no equations and amounts to the solvability of the system of inequalities $g_j+G_j\ne v_j$, 
 together with $P_\sigma$. 
 Clearly, because $R$ is an infinite domain, 
 all inequalities in which some of the variables 
 $x_1,\dots ,x_k$ occurs with a non-zero 
 coefficient may be satisfied simultaneously. All the inequalities in which none of the variables $x_1,\dots ,x_k$ occurs is clearly equivalent to 
 a formula of the form $P_\tau$. 
 Hence $\phi$ is equivalent in $\mathcal R$ to 
 a formula of the form $P_\tau$, for 
some sentence $\tau$ of $\mathcal{L}_p$. 

\end{proof}

\color{black}


\section*{Acknowledgments}
This research work was supported from Greek and 
European Union resources, through the National Strategic Reference Framework (NSRF 2014-2020), under the call ``Support for researchers with emphasis on young researchers (EDBM103)'' and the funded project ``Problems of Diophantine Nature in Logic and Number Theory''  with code MIS 5048407.

\printbibliography


\newpage
\appendix
\section{Appendix - Omitted Proofs} 
\label{sec:omitted_proofs}

\subsection{Proof of Fact~\ref{ex}} 
\label{sub:proof_of_fact}

\begin{proof}
To prove Item 1, consider a $u=\frac{a}{b}\in R$ with $a$ and $b$ coprime polynomials of $F[z]$ and let $c\in F[z]\setminus \{0\}$. Then $b$ is invertible in $R$. By the elementary algebra of polynomials we know that there are $v',r'\in F[z]$, with $deg(v')<deg (b)$ and $deg (r')<deg (c)$, such that $v'c+r'b=1$. So $\frac{a}{b}=v'\frac{a}{b}c+ar'$. Divide  the polynomial $ar'$ by $c$ - using euclidean division of polynomials - to obtain the result. 

For the uniqueness statement, assume in the above that $c$ is not divisible by any irreducible factor in $F[z]$ which is invertible in $R$. Assume that there are $v$, $\hat v$ in $R$ and $r$ and $\hat r$ in $F[z]$ such that $u=vc+r=\hat vc+\hat r$. Then $0=(v-\hat v)c+(r-\hat r)$ with $deg (r-\hat r)<deg (c)$. Then there is a $\hat b\in F[z]$, which is invertible in $R$, such that $\hat b (v-\hat v)\in F[z]$. Hence $0=\hat b (v-\hat v)c+\hat b (r-\hat r)$. Hence $c$ divides $\hat b (r-\hat r)$ in $F[z]$. But by hypothesis $c$ is co-prime to $\hat b$. Hence $c$ divides $r-\hat r$ in $F[z]$, so, necessarily $r-\hat r=0$. 

For Item 2, iterate the conclusion of Item 1 $N$ times, each time applying it to the `quotient' $v$ of the previous step.

Item 3 follows from Item 1.
\end{proof}

\subsection{Proof of Proposition~\ref{logic1}} 
\label{sub:proof_of_proposition_logic1}
\begin{proof} 
($\rightarrow$) Say that for some $\tilde u\in R$, for some value $\tilde x$ of the tuple of variables $x$ and for some value $\tilde \alpha $ (over $F$) of the tuple of variables $\alpha$ we have $\tilde u=f(\tilde x)+H(\alpha )$. 

Consider any value $\tilde {\underline{x}}$ of the tuple $\underline{x}$, any value $\tilde y$ of the tuple $y$ and any value (over $F$) of the tuple of variables $\tilde \gamma $ of the tuple $\gamma$ for which 

$\tilde u=f(\tilde {\underline{x}})+ h(\tilde y)+\frac{1}{e^N} G(\tilde \gamma )$.  
Set $\tilde w=\tilde {\underline{x}}-\tilde x$ where $-$ denotes component-wise subtraction. Then, by the additivity of $f$ we have 
\begin{gather}
f(\tilde w)+h(\tilde y) +\frac{1}{e^N} G(\tilde \gamma )=
f(\tilde {\underline{x}}-\tilde x)+h(\tilde y) +\frac{1}{e^N} G(\tilde \gamma )= \\
f(\tilde {\underline{x}})-f(\tilde x)+h(\tilde y) +
 \frac{1}{e^N} G(\tilde \gamma )=
\tilde u- f(\tilde x)=H(\tilde \alpha)\ .\notag
\end{gather}

($\leftarrow$) By hypothesis there are $\tilde {\underline{x}}$, $\tilde y$ and $\tilde \gamma$ so that 
$u=f(\tilde {\underline{x}})+h(\tilde y)+\frac{1}{e^N} G(\tilde \gamma )$.  Let $\tilde u\in R$ and assume that $\phi_3 (\tilde u)$.
Then  there is a $\tilde w$ and a $\tilde \alpha$ such that  
$f(\tilde w)+h(\tilde y)+\frac{1}{e^N} G(\tilde \gamma )=H(\tilde \alpha )$. 

Set $\tilde x=\tilde {\underline{x}}-\tilde w$. By the additivity of $f$ and we have 
\begin{gather}
f(\tilde x)+H(\tilde \alpha )=f(\tilde {\underline{x}})-f(\tilde w)+H(\tilde \alpha)= \\
[\tilde u-h(\tilde y)-\frac{1}{e^N} G(\tilde \gamma )]-
[H(\tilde \alpha )-h(\tilde y)-\frac{1}{e^N}G(\tilde \gamma )]+H(\tilde \alpha )=\tilde u\ , \notag
\end{gather}

hence $\tilde u\in Im(f)+Im_{F}(H)$.
\end{proof}
\color{black}

\subsection{Proof of Proposition~\ref{logic2}} 
\label{sub:proof_of_proposition_logic2}

\begin{proof}
Assume that $\tilde u$ and $\tilde v_j$ are  given values of the terms $u$ and $v_j$, respectively. 
Assume that for some value $\tilde x$ of the array of variables $x$ over $R$ and for some value $\tilde \alpha$  of the array of variables 
$\alpha$ over $F$ the statement $\psi (\tilde x,\tilde \alpha )$
is true in $\mathcal R$, with $\psi$ as in \ref{psi}, 
i.e.,
\begin{equation}
   f(\tilde x) + H(\tilde \alpha) = \tilde u \wedge_{j \in J} e_j(\tilde x) + G_j(\tilde \alpha) \neq \tilde v_j
 \wedge P_\sigma (\tilde \alpha ) 
\end{equation}
holds.

Let $\tilde w$ be a tuple of elements of $R$ and $\tilde \beta$ be a tuple of elements of $F$ such that $ f(\tilde w)+H(\tilde \beta)=\tilde u $
is true in $\mathcal R$. 

Define $\tilde t=\tilde x-\tilde w$ and $\tilde \gamma =\tilde \alpha -\tilde \beta$, where $-$ denotes component-wise subtraction. 
Then, by the additivity of $f$, $H$, $e_j$ and $G_j$ we have 
\begin{equation}
f(\tilde t)+H(\tilde \gamma )=f(\tilde x)- f(\tilde w)+H(\tilde \alpha )-H(\tilde \beta)=\tilde u-\tilde u =0,
\end{equation}
and for each $j\in J$ 
the following holds: 
\begin{equation}
e_j(\tilde t)+G_j(\tilde \gamma )=  e_j(\tilde x)+G_j(\tilde \alpha )-e_j(\tilde w)-G_j(\tilde \beta )\ne 
\tilde  v_j- e_j(\tilde w)-G_j(\tilde \beta )\ .
\end{equation}

Moreover $P_\sigma (\tilde \alpha)$ holds true, hence $P_\sigma (\tilde \beta +\tilde \gamma )$ holds true. 
 It follows that ${\mathcal R}\models \phi \rightarrow \phi_2$.

Now assume that $\phi_2$ is true in $\mathcal R$ for the given values $\tilde u$ and $\tilde v_j$ of $u$ and $v_j$, respectively.  Since $\tilde u\in Im(f)+Im_{F}(H)$ there are values $\tilde w$ over $R$ and $\tilde \beta $ 
over $F$ of the variables $w$ and $\beta$ such that 
$\tilde \beta\in F\wedge f(w)+H(\tilde \beta )=\tilde u$. 
Since $\phi_2$ is true, there are values $\tilde t$ over $R$ and $\tilde  \gamma$ over $F$ of the variables $t$ and $\gamma$, respectively, so that 
$f(\tilde t)+H(\tilde \gamma )=0\wedge_{j \in J} e_j(\tilde t) + G_j(\tilde \gamma) \neq v_j -e_j(\tilde w)-G_j(\tilde\beta ) \wedge P_\sigma(\tilde \beta +\tilde \gamma)$. 
Define $\tilde x=\tilde w +\tilde t$ and $\tilde \alpha =\tilde \beta +\tilde \gamma $. Obviously $\psi (x,\alpha )$ of \ref{psi} is true for the values  $\tilde x$ over $R$ of $x$ and $\tilde  \alpha$ over $F$ of $\alpha$. 
It follows that ${\mathcal R}\models \phi_2 \rightarrow \phi$.
 
\end{proof}


\subsection{Outline of the proof of Lemma~\ref{lem:normalized-R}} 
\label{sub:proof_of_lemma_lem:normalized-R}

\begin{proof}
We present the proof of the statement of the Lemma up to the point that the resulting 
  $\tilde f$ is $p$-free. 
  Consider a given additive polynomial $f(x)=\sum_{i=1}^{m_0}f_i(x_i)$ of degree $p^s$ where  each $f_i(x_i)$ is an additive polynomial of the variable $x_i$ only and of degree $p^{s_i}$  and coefficients $b_i\in \F_p[z]$. Assume that the set $B=\{ b_{i}z^{jp^{s_i}}\ |\ i=1,\dots ,m_0,\ 0\leq j< p^{s-s_i}\}$ is linearly dependent  in ${\mathcal V}_s(\F_p)$. Then there are $c_{i,j}\in \F_p[z]$, not all equal to $0$, so that 
  $\sum_{i,j}c_{i,j}^{p^s}b_iz^{jp^{s_i}}=0$, where the index $j$ ranges as in the set $B$. Re-enumerating the indices $i$ we may assume that some $c_{1,j}$ is not equal to $0$ and for each $i$ for which there is a $j$ so that $c_{i,j}\ne 0$ we have $s_1\geq s_i$. Let 
  \begin{equation}
    c:=\sum_{\mu =0}^{p^{s-s_1}-1} c_{1,\mu}^{p^{s-s_1}}z^{\mu}.
  \end{equation}
  Notice that, since the set $\{ 1,\dots ,z^{p^{s-s_1}-1}\}$ is linearly independent in $\mathcal V_{s-s_1} (\F_p)$  (see Fact \ref{onto1})
  we have $c\ne 0$. We apply the proper transformation:
  $ x_1=cy_1+H$ and, for $i\ne 1$,
  \begin{equation}
  x_i=y_i+\sum_{j=0}^{p^{s-s_i}-1} c_{i,j}^{p^{s-s_i}}z^{j}y_1^{p^{s_1-s_i}},
  \end{equation}
  where  $H=\alpha_0+\alpha_1 z+\dots +\alpha_{\ell-1} z^{\ell -1}$, $\ell$ is the degree of $c$ 
 and the $\alpha_k$ are new and pairwise distinct $F$-variables. 
  The polynomial that results from the transformation has the form $\hat f+G$, where $\hat f$ has variables the unrestricted (i.e. not $F$-variables) $y_i$  and $G$ is an additive polynomial in the $F$-variables $\alpha_k$.  We observe that the coefficient of $y_1^{p^{s_1}}$ in $\hat f$ is 
  $b_1c^{p^{s_1}}+\sum_{i\ne 1}\sum_{j=0}^{p^{s-s_i}-1} c_{i,j}^{p^{s}}z^{jp^{s_i}}=0$.
  Then $\hat f$  has degree in $y_1$ less than the degree of $f$ in $x_1$ and for all $i>1$ it has degree in $y_i$ equal to the degree of $f$ in $x_i$. Work by induction on the sum of the degrees of $f$ in each of its (unrestricted) variables. 
  The proper transformation $\xi$ is onto, by Facts \ref{ex} 
  and \ref{onto1}, so $Im(f)=Im(\hat f)+Im_F(G)$. 
  
  Now consider a given additive polynomial $f$, as above, which is $p$-free. It is easy to see that substituting each variable $x_i$ by 
  \begin{equation}
    x_{i,0}^{p^{s-s_i}}+\dots +z^k x_{i,k}^{p^{s-s_i}}+ z^{p^{s-s_i}-1} x_{i,{p^{s-s_i}}-1}^{p^{s-s_i}}\ ,
  \end{equation}
  where the $x_{i,k}$ are new variables, results in a normalized additive polynomial $\tilde f$: The degree of $\tilde f$ with respect to each of its variables is $s$ and the set of its leading coefficients is linearly independent in $\mathcal V_s(\F_p)$. By Fact  \ref{onto1}, $f$ and $\tilde f$ have the same image.
  
 To convert a normalized additive polynomial to a strongly normalized one takes the application of a sequence of  proper transformations - it is left to the reader or see \cite{PheidasZahidi2004}, Lemma 3.3.
\end{proof}



\end{document}